\renewcommand\baselinestretch{1.5}
\theoremstyle{plain}
\newtheorem{thm}{Theorem}[section]
\newtheorem{cor}[thm]{Corollary}
\newtheorem{lem}[thm]{Lemma}
\newtheorem{defn}[thm]{Definition}
\newtheorem{rem}[thm]{Remark}
\newtheorem{Exa}[thm]{Example}
\numberwithin{equation}{section}
\renewcommand\baselinestretch{1.5}
\begin{document}
\vspace{4cm} \vspace{4cm} \centerline{\bf On weakly $(m,n)-$closed $\delta-$primary ideals of commutative rings}\vspace{2cm}
 \centerline {\baselineskip.8cm
\centerline {  {Mohammad Hamoda$^1$ and Mohammed Issoual$^2$  } }}
\centerline {\baselineskip.8cm {$^1$Department of Mathematics, Faculty of Applied Science, Al-Aqsa University, Gaza, Palestine.}}
\centerline {\baselineskip.8cm{P.O. Box 4051, Gaza, Palestine }}
\centerline {\baselineskip.8cm{e-mail: ma.hmodeh@alaqsa.edu.ps }}
\centerline {\baselineskip.8cm{ORCID iD: https://orcid.org/0000-0002-5452-9220 }}
\baselineskip.8cm {$^2$Department of Mathematics, Crmef Rabat-Sale-Kenitra, Annex Khmisset, Rabat, Morocco.}\\
\centerline {\baselineskip.8cm{Lycee Abdellah Gennon, R404, Khemisset, Morocco }}
\centerline {\baselineskip.8cm{e-mail: issoual2@yahoo.fr }}
\centerline {\baselineskip.8cm{ORCID iD: https://orcid.org/0000-0002-9872-8221}}

\thispagestyle{empty}
\renewcommand\baselinestretch{1.5}
\vspace{2cm}
\begin{abstract}
Let $R$ be a commutative ring with $1\neq0$.  In this article, we introduce the concept of weakly $(m,n)-$closed $\delta-$primary ideals of $R$ and explore its basic properties.  We show that $I\bowtie^{f}J$ is a weakly $(m,n)-$closed $\delta_{\bowtie^{f}}-$primary ideal of $A\bowtie^{f}J$ that is not $(m,n)-$closed $\delta_{\bowtie^{f}}-$primary if and only if $I$ is a weakly $(m,n)-$closed $\delta-$primary ideal of $A$ that is not $(m,n)-$closed $\delta-$primary and for every $\delta$-$(m,n)$-unbreakable-zero element $a$ of $I$ we have $(f(a)+j)^{m}=0$ for every $j\in~J$, where $f:A\rightarrow B$ is a homomorphism of rings and $J$ is an ideal of $B.$  Furthermore, we provide examples to demonstrate the validity and applicability of our results.
\end{abstract}
\vspace{1cm} \noindent {\bf "2020 Mathematics Subject Classification"}: 13C05; 13C13; 13F05.
\\ \noindent{\bf Keywords:}
$\delta-$primary ideal, weakly $2-$absorbing ideal, weakly $n-$absorbing ideal.\\
\section{Introduction}
Throughout this article all rings are commutative with nonzero identity, all modules are unitary and all ring homomorphisms preserve the identity.  Recently, ring theorists have been interested in the class of prime ideals and modules and its generalizations.  The notion of weakly prime ideals was introduced by Anderson and Smith in \cite{p} and further studied by several authors, (see for example \cite{q,i}).  Badawi in \cite{c} generalized the notion of prime ideals in a different way, called $2-$absorbing ideal.  Many authors studied on this issue, (see for example \cite{b,e,h,k,w,n}).  The notion of $\delta-$primary ideals in commutative rings was introduced by Zhao in \cite{o}.  This concept is considered to unify prime and primary ideals.  Many results of prime and primary ideals are extended to these structures, (see for example \cite{j}, \cite{m}).  Our aim of this article is to introduce a new class of ideals that is closely to the class of weakly $n-$absorbing ideals, called weakly $(m,n)-$closed $\delta-$primary ideals.  The motivation of this article is to complete what has been studied in \cite{t} and to develop related results.  The remains of this article is organized as follows.  In section $2$, we give some basic concepts and results that are indispensable in the sequel of this article.  Section $3$, is devoted to the main results concerning weakly $(m,n)-$closed $\delta-$ primary ideals.  Section 4 concerns weakly $(m,n)-$closed $\delta-$primary ideals in amalgamated algebra.  Section $5$, concerns the conclusion.
\section{Preliminaries}
In this section, we state some basic concepts and results related to weakly $(m,n)-$closed $\delta-$primary ideals.  We hope that this will improve the readability and understanding of this article.  Let $R$ be a commutative ring and $I$ be an ideal of $R$.  An ideal $I$ is called proper if $I\neq~R$.  Let $I$ be a proper ideal of $R$.  Then, the radical of $I$ is defined by $\{x\in~R~\mid~\exists~n\in\mathbb{N},~x^n\in~I\}$, denoted by $\sqrt{I}$ (note that $\sqrt{R}=R$ and $\sqrt{0}$ is the ideal of all nilpotent elements of $R$).  For the ring $R$, we shall use $Nil(R)$, $J(R)$ and $char(R)$ to denote the set of all nilpotent elements of $R$, the set of all maximal ideals of $R$ and the characteristic of $R$, respectively.
\begin{defn}(\cite{c}, \cite{g})
A proper ideal $I$ of $R$ is called a $2-$absorbing ideal (respectively, $2-$absorbing primary ideal) of $R$ if whenever $abc\in~I$ (respectively, $0\neq~abc\in~I$) for some $a,b,c\in~R$, implies $ab\in~I$ or $ac\in~I$ or $bc\in~I$ (respectively, $ab\in~I$ or $ac\in~\sqrt{I}$ or $bc\in~\sqrt{I}$).
\end{defn}
\begin{defn}\cite{b}
Let $n$ be a positive integer.  A proper ideal $I$ of $R$ is called an $n-$absorbing ideal (respectively, strongly $n-$absorbing ideal) of $R$ if whenever $x_1...x_{n+1}\in~I$ for some $x_1,...,x_{n+1}\in~R$ (respectively, $I_1...I_{n+1}\subseteq~I$ for some ideals $I_1,...,I_{n+1}$ of $R$), then there are $n$ of the $x_i~'s$ (respectively, $n$ of the $I_i~'s$) whose product is in $I$.
\end{defn}
Thus, a $1-$absorbing ideal is just a prime ideal.
\begin{defn}\cite{w}
Let $n$ be a positive integer.  A proper ideal $I$ of $R$ is called a weakly $n-$absorbing ideal (respectively, strongly weakly $n-$absorbing ideal) of $R$ if whenever $0\neq~x_1...x_{n+1}\in~I$ for some $x_1,...,x_{n+1}\in~R$ (respectively, $0\neq~I_1...I_{n+1}\subseteq~I$ for some ideals $I_1,...,I_{n+1}$ of $R$), then there are $n$ of the $x_i~'s$ (respectively, $n$ of the $I_i~'s$) whose product is in $I$.
\end{defn}
Thus, a weakly $1-$absorbing ideal is just a weakly prime ideal.
\begin{defn}\cite{a}
Let $m$ and $n$ be positive integers.  A proper ideal $I$ of $R$ is called a semi$-n-$absorbing ideal of $R$ if whenever $x^{n+1}\in~I$ for some $x\in~R$ implies $x^n\in~I$.  More generally, a proper ideal $I$ of $R$ is called an $(m,n)-$closed ideal of $R$ if whenever $x^m\in~I$ for some $x\in~I$ implies $x^n\in~I$.
\end{defn}
\begin{defn}\cite{r}
Let $m$ and $n$ be positive integers.  A proper ideal $I$ of $R$ is called a weakly semi$-n-$absorbing ideal of $R$ if whenever $0\neq~x^{n+1}\in~I$ for some $x\in~R$ implies $x^n\in~I$.  More generally, a proper ideal $I$ of $R$ is called a weakly $(m,n)-$closed ideal of $R$ if whenever $0\neq~x^m\in~I$ for some $x\in~I$ implies $x^n\in~I$.
\end{defn}
\begin{defn}\cite{o}
Let $Id(R)$ be the set of all ideals of $R$.  A function $\delta:Id(R)\longrightarrow~Id(R)$ is called an expansion function of $Id(R)$ if it has the following two properties: $I\subseteq\delta(I)$ and if $I\subseteq~J$ for some ideals, $I$, $J$ of $R$, then $\delta(I)\subseteq\delta(J)$.  A proper ideal $I$ of $R$ is called a $\delta-$primary ideal of $R$ if whenever $xy\in~I$ for some $x,y\in~R$ implies $x\in~I$ or $y\in\delta(I)$.
\end{defn}
\begin{defn}\cite{j}
A proper ideal $I$ of $R$ is called a $2-$absorbing $\delta-$primary ideal of $R$ if whenever $xyz\in~I$ for some $x,y,z\in~R$ implies $xy\in~I$ or $yz\in\delta(I)$ or $xz\in\delta(I)$.  A proper ideal $I$ of $R$ is called a strongly $2-$absorbing $\delta-$primary ideal of $R$ if whenever $I_1,I_2,I_3$ are ideals of $R$, $I_1I_2I_3\subseteq~I$, $I_1I_3\not\subseteq~I$ and $I_2I_3\not\subseteq~\delta(I)$, then $I_1I_2\subseteq\delta(I)$.
\end{defn}
The notions of $n-$absorbing $\delta-$primary ideals and weakly $n-$absorbing $\delta-$primary ideals are generalizations of the notions of $n-$absorbing primary ideals and weakly $n-$absorbing primary ideals respectively.  Recall the following definition.
\begin{defn}\cite{m}
A proper ideal $I$ of $R$ is called an $n-$absorbing $\delta-$primary ideal (respectively, weakly $n-$absorbing $\delta-$primary ideal) of $R$ if whenever $x_1...x_{n+1}\in~I$ (respectively, $0\neq~x_1...x_{n+1}\in~I$) for some $x_1,...,x_{n+1}\in~R$ implies $x_1...x_n\in~I$ or there exists $1\leq~k<n$ such that $x_1...\hat{x}_k...x_{n+1}\in\delta(I)$, where $x_1...\hat{x}_k...x_{n+1}$ denotes the product of $x_1...x_{k-1}x_{k+1}...x_{n+1}$.
\end{defn}
The notion of $(m,n)-$closed $\delta-$primary ideals is a generalization of the notion of $n-$absorbing $\delta-$primary ideals.  Recall the following definition.
\begin{defn}\cite{t}
Let $R$ be a commutative ring, $\delta$ be an expansion function of $Id(R)$, and $m$ and $n$ be positive integers. A proper ideal$I$ be of $R$ is called a semi$-n-$absorbing $\delta-$primary ideal of $R$ if whenever $a^{n+1}\in~I$ for some $a\in~R$, then $a^n\in\delta(I)$.  More generally, a proper ideal $I$ of $R$ is called an $(m,n)-$closed $\delta-$primary ideal of $R$ if whenever $a^m\in~I$ for some $a\in~R$, then $a^n\in\delta(I)$.
\end{defn}
\section{Properties of weakly $(m,n)-$closed $\delta-$primary ideals}
We start by the following definitions.
\begin{defn}
Let $R$ be a commutative ring, $I$ be a proper ideal of $R$, $\delta$ be an expansion function of $Id(R)$, and $m$ and $n$ be positive integers.\\
(1) $I$ is called a weakly semi$-n-$absorbing $\delta-$primary ideal of $R$ if whenever $0\neq~a^{n+1}\in~I$ for some $a\in~R$, then $a^n\in\delta(I)$.\\
(2) $I$ is called a weakly $(m,n)-$closed $\delta-$primary ideal of $R$ if whenever $0\neq~a^m\in~I$ for some $a\in~R$, then $a^n\in\delta(I)$.
\end{defn}
Clearly, a proper ideal is weakly $(m,n)-$closed $\delta-$primary for $1\leq~m\leq~n$; so we usually assume that $1\leq~n<m$.
\begin{thm}
Let $R$ be a commutative ring, $I$ be a proper ideal of $R$, $\delta$ be an expansion function of $Id(R)$, and $m$ and $n$ be positive integers.  Then
\begin{enumerate}
\item[(1)] $I$ is a weakly semi$-n-$absorbing $\delta-$primary ideal of $R$ if and only if I is a weakly $(n+1,n)-$closed $\delta-$primary ideal of $R$.\\
\item[(2)] If $I$ is a weakly $n-$absorbing $\delta-$primary ideal of $R$, then $I$ is a weakly semi$-n-$absorbing $\delta-$primary ideal of $R$.\\
\item[(3)] If $I$ is a weakly $(m,n)-$closed $\delta-$primary ideal of $R$, then $I$ is a weakly $(m,k)-$closed $\delta-$primary ideal of $R$ for every positive integer $k\geq~n$.\\
\item[(4)] A weakly $n-$absorbing ideal of $R$ is a weakly $(m,n)-$closed $\delta-$primary ideal of $R$ for every positive integer $m$.
\end{enumerate}
\end{thm}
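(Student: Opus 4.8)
The plan is to read off parts (1)--(3) directly from the definitions and to isolate a small inductive lemma for part (4). For (1), note that the definition of a weakly $(m,n)$-closed $\delta$-primary ideal with $m=n+1$ has hypothesis ``$0\neq a^{n+1}\in I$'' and conclusion ``$a^{n}\in\delta(I)$'', which is word-for-word the definition of a weakly semi-$n$-absorbing $\delta$-primary ideal; so the two notions coincide and there is nothing to prove. For (2), assume $I$ is weakly $n$-absorbing $\delta$-primary and $0\neq a^{n+1}\in I$ for some $a\in R$; applying the defining condition to $x_1=\cdots=x_{n+1}=a$ yields either $a^{n}=x_1\cdots x_n\in I\subseteq\delta(I)$ or $a^{n}=x_1\cdots\widehat{x}_k\cdots x_{n+1}\in\delta(I)$ for some $1\le k<n$, and in both cases $a^{n}\in\delta(I)$, as required for $I$ to be weakly semi-$n$-absorbing $\delta$-primary. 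Here it is worth noting that the asymmetry in the definition of weakly $n$-absorbing $\delta$-primary (the index running over $1\le k<n$ rather than $1\le k\le n$) is harmless precisely because all the chosen factors are equal, so every omission returns the same power $a^{n}$.

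For (3), let $k\ge n$ and suppose $0\neq a^{m}\in I$. Since $I$ is weakly $(m,n)$-closed $\delta$-primary we have $a^{n}\in\delta(I)$; because $\delta(I)$ is an ideal of $R$ and $a^{k}=a^{k-n}\cdot a^{n}$ with $k-n\ge 0$, it follows that $a^{k}\in\delta(I)$. Hence $I$ is weakly $(m,k)$-closed $\delta$-primary for every $k\ge n$.

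The substantive part is (4). Since a proper ideal is automatically weakly $(m,n)$-closed $\delta$-primary whenever $m\le n$ (as already observed before the theorem), we may assume $m\ge n+1$. I will use the following claim: if $I$ is weakly $n$-absorbing and $0\neq a^{t}\in I$ with $t\ge n$, then $a^{n}\in I$. This is proved by induction on $t$: the case $t=n$ is trivial; for $t=n+1$, applying the weakly $n$-absorbing condition to the product of the $n+1$ equal factors $a$ gives $a^{n}\in I$, since any $n$ of the factors multiply to $a^{n}$; for $t\ge n+2$, write $0\neq a^{t}\in I$ as the product of the $n+1$ elements $a,\dots,a$ ($n$ copies) and $a^{t-n}$, so the weakly $n$-absorbing condition returns either $a^{n}\in I$ (dropping the factor $a^{t-n}$), in which case we are done, or $a^{t-1}=a^{n-1}a^{t-n}\in I$ (dropping one copy of $a$); in the latter case $a^{t}\neq 0$ forces $a^{t-1}\neq 0$, and $t-1\ge n$, so the induction hypothesis yields $a^{n}\in I$. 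Granting the claim, (4) follows at once: if $0\neq a^{m}\in I$ with $m\ge n+1$, then $a^{n}\in I\subseteq\delta(I)$, so $I$ is a weakly $(m,n)$-closed $\delta$-primary ideal of $R$ for every positive integer $m$.

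I do not anticipate any genuine obstacle here; the only places that require a moment's care are the harmless definitional asymmetry noted in (2), and in (4) the bookkeeping that every intermediate power $a^{t-1},a^{t-2},\dots$ remains nonzero along the induction (which is immediate, since a vanishing lower power of $a$ would force $a^{m}=0$), so that the weakly $n$-absorbing hypothesis may legitimately be reapplied at each stage.
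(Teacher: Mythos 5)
Your proposal is correct and follows the same route the paper intends: the paper's proof is simply ``Follows directly from the definitions,'' and parts (1)--(3) of your argument are exactly the definitional unwindings being alluded to. The only place where genuine content is needed is part (4), and your inductive lemma (that $0\neq a^{t}\in I$ with $t\geq n$ forces $a^{n}\in I$ for a weakly $n$-absorbing ideal, with the observation that $a^{t}\neq 0$ propagates to $a^{t-1}\neq 0$) is the standard and correct way to supply the detail the paper glosses over.
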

\begin{proof}
Follows directly from the definitions.
\end{proof}
In the following example, we give some expansion functions of ideals of a ring $R$.
\begin{Exa}
\begin{enumerate}
\item[(1)] The identity function $\delta_I$, where $\delta_I(I)=I$ for every $I\in~Id(R)$, is an expansion function of ideals of $R$.\\
\item[(2)] For each ideal $I,$ define $\delta_{\sqrt{I}}(I)=\sqrt{I}$.  Then $\delta_{\sqrt{I}}$ is an expansion function of ideals of $R$.
\end{enumerate}
\end{Exa}
\begin{rem}
\begin{enumerate}
\item[(1)] Let $R$ be a commutative ring, $\delta_I$ be an expansion function of $Id(R)$ and $m$ and $n$ positive be integers, then a proper ideal $I$ of $R$ is a weakly $(m,n)-$closed $\delta_I-$primary ideal of $R$ if and only if $I$ is a weakly $(m,n)-$closed ideal of $R$.\\
\item[(2)] It is clear that any $(m,n)-$closed $\delta-$primary ideal of $R$ is weakly $(m,n)-$closed $\delta-$primary ideal.  The converse need not hold.\\
\item[(3)] A weakly $(m,n)-$closed $\delta-$primary ideal does not be weakly $(\acute{m},n)-$closed $\delta-$primary for $\acute{m}<m$.
\end{enumerate}
\end{rem}
\begin{Exa}
Let $R=\mathbb{Z}_8$.  Then $I=(0),$ the zero ideal is clearly a weakly $(3,1)-$closed $\delta_{\sqrt{I}}-$primary ideal of $\mathbb{Z}_8$.  However, $I$ is not $(3,1)-$closed $\delta_{\sqrt{I}}-$primary ideal of $\mathbb{Z}_8$ since $2^3=0\in~I$ and $2\not\in~I$.  More generally, $I=(0)$ is a weakly $(n+1,n)-$closed $\delta_{\sqrt{I}}-$primary ideal of $\mathbb{Z}_{2^{n+1}}$, by definition, but it is easy to see that $I$ is not an $(n+1,n)-$closed $\delta_{\sqrt{I}}-$primary ideal of $\mathbb{Z}_{2^{n+1}}$.
\end{Exa}
\begin{Exa}
Let $R=\mathbb{Z}_8$ and $\delta$ an expansion function of $Id(R)$ and $I=\{0,4\}$.  Then $I$ is weakly $(3,1)-$closed $\delta-$primary ideal of $\mathbb{Z}_8$, since $r^3=0$ for every nonunit $r\in~R$.  However, $I$ is not weakly $(2,1)-$closed $\delta-$primary ideal since $0\neq2^2=4\in~I$ and $2\not\in~I$.
\end{Exa}
In the following example, we have a weakly $(m,n)-$closed $\delta-$primary ideal of $R$, that is neither a weakly $(m,n)-$closed ideal of $R$ nor an $(m,n)-$closed $\delta-$primary ideal of $R$.
\begin{Exa}
Let $A=\mathbb{Z}_{2^m}[\{X_m\}_{m\in\mathbb{N}}]$ and $I=(X_{m-1}^mA)$ be an ideal of $A$.  Let $R=A/I$ and define $\delta:Id(R)\longrightarrow~Id(R)$ such that $\delta(K)=K+(xA+I)/I$.  It is clear that $\delta$ is an expansion function of $Id(R)$.  Let $J=(X_m^mA)/I$.  We show that $J$ is not a weakly $(m,n)-$closed ideal of $R$.  Now, in the ring $R$, we have $0\neq~X_m^m+I\in~J$, but $X_m^n+I\not\in~J$ for every positive integers $1\leq~n<m$.  Thus, $J$ is not a weakly $(m,n)-$closed ideal of $R$.  We show that $J$ is not an $(m,n)-$closed $\delta-$primary ideal of $R$.  Let $x=2+I\in~R$.  Then $x^m=0+I\in~J$, but $x^n\not\in\delta(J)$ for every positive integers $1\leq~n<m$.  Thus, $J$ is not an $(m,n)-$closed $\delta-$primary ideal of $R$.  By the construction of $\delta$, one can easily seen that $J$ is a weakly $(m,n)-$closed $\delta-$primary ideal of $R$.
\end{Exa}
\begin{thm}
Let $R$ be a commutative ring, $\delta$ and $\gamma$ be expansion functions of $Id(R)$ with $\delta(I)\subseteq\gamma(I)$, $m$ and $n$ be integers with $1\leq~n<m$, and $I$ be a proper ideal of $R$.
\begin{enumerate}
\item[(1)] If $I$ is a weakly $(m,n)-$closed $\delta-$primary ideal of $R$, then $I$ is a weakly $(m,n)-$closed $\gamma-$primary ideal of $R$.
\item[(2)] If $\delta(I)$ is a weakly $(m,n)-$closed ideal of $R$, then $I$ is a weakly $(m,n)-$closed $\delta-$primary ideal of $R$.
\end{enumerate}
\end{thm}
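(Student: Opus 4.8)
The plan is to simply unwind the definitions in both parts; neither statement needs anything beyond the two defining properties of an expansion function (extensivity $I\subseteq\delta(I)$ and monotonicity) together with the hypothesis $\delta(I)\subseteq\gamma(I)$, so this should be a short argument.

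For part (1), I would fix an arbitrary $a\in R$ with $0\neq a^{m}\in I$. Since $I$ is a weakly $(m,n)-$closed $\delta-$primary ideal, this forces $a^{n}\in\delta(I)$. Applying the inclusion $\delta(I)\subseteq\gamma(I)$ gives $a^{n}\in\gamma(I)$, which is exactly the condition required for $I$ to be a weakly $(m,n)-$closed $\gamma-$primary ideal. Since $a$ was arbitrary and $I$ is already proper, part (1) follows at once.

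For part (2), I would again take $a\in R$ with $0\neq a^{m}\in I$. Because $\delta$ is an expansion function, $I\subseteq\delta(I)$, hence $0\neq a^{m}\in\delta(I)$. Now invoke the hypothesis that $\delta(I)$ is a weakly $(m,n)-$closed ideal of $R$: from $0\neq a^{m}\in\delta(I)$ we obtain $a^{n}\in\delta(I)$. This is precisely the conclusion needed for $I$ to be a weakly $(m,n)-$closed $\delta-$primary ideal (note that $I$ is proper, since $\delta(I)$ being a weakly $(m,n)-$closed ideal presupposes $\delta(I)\neq R$, whence $I\subseteq\delta(I)\subsetneq R$).

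I do not expect a genuine obstacle here: the result is a formal consequence of the definitions. The only point worth stating carefully is that in part (2) the phrase ``weakly $(m,n)-$closed ideal'' is meant in the classical sense (equivalently, weakly $(m,n)-$closed $\delta_{I}-$primary, as noted in the remark above), so that no further expansion is applied to $\delta(I)$ when we deduce $a^{n}\in\delta(I)$; keeping this straight is what makes the one-line implication go through.
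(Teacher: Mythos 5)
Your argument is correct and is essentially identical to the paper's proof: part (1) is the immediate observation that $a^{n}\in\delta(I)\subseteq\gamma(I)$, and part (2) uses $0\neq a^{m}\in I\subseteq\delta(I)$ together with the weak $(m,n)$-closedness of $\delta(I)$ to get $a^{n}\in\delta(I)$. Your added remarks on properness and on reading ``weakly $(m,n)$-closed'' as $\delta_{I}$-primary are harmless clarifications of the same one-line argument.
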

\begin{proof}
\begin{enumerate}
\item[(1)] Since $\delta(I)\subseteq\gamma(I)$ and $I$ is a weakly $(m,n)-$closed $\delta-$primary ideal of $R$, then the claim follows.
\item[(2)]  Let $0\neq~a^m\in~I\subseteq\delta(I)$ for some $a\in~R$.  Since $\delta(I)$ is a weakly $(m,n)-$closed ideal of $R$, then $a^n\in\delta(I)$.  Therefore, $I$ is a weakly $(m,n)-$closed $\delta-$primary ideal of $R$.
\end{enumerate}
\end{proof}
Let $R$ be a commutative ring, $\delta_1$ and $\delta_2$ are expansion functions of $Id(R)$.  Let $\delta:Id(R)\longrightarrow~Id(R)$ such that $\delta(I)=(\delta_1\circ\delta_2)(I)=\delta_1(\delta_2(I))$.  Then, $\delta$ is an expansion function of ideals of $R$, such a function is denoted by $\delta_\circ$.
\begin{thm}
Let $R$ be a commutative ring, $\delta$ and $\gamma$ be expansion functions of $Id(R)$, $m$ and $n$ positive be integers, $I$ be a proper ideal of $R$, and $\delta(0)$ be an $(m,n)-$closed $\gamma-$primary ideal of $R$.  Then, $I$ is a weakly $(m,n)-$closed $\gamma\circ\delta-$primary ideal of $R$ if and only if $I$ is an $(m,n)-$closed $\gamma\circ\delta-$primary ideal of $R$
\end{thm}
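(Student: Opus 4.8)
The reverse implication is immediate: as observed in the Remark above, every $(m,n)$-closed $\delta'$-primary ideal is automatically weakly $(m,n)$-closed $\delta'$-primary, applied here with $\delta'=\gamma\circ\delta$ (which is an expansion function of $Id(R)$, as noted just before the statement). So the whole content is the forward implication, and the plan is to establish it by a two-case analysis on whether the relevant power is zero.

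Assume $I$ is a weakly $(m,n)$-closed $\gamma\circ\delta$-primary ideal of $R$, and let $a\in R$ with $a^{m}\in I$; the goal is to show $a^{n}\in(\gamma\circ\delta)(I)=\gamma(\delta(I))$. If $a^{m}\neq 0$, then $0\neq a^{m}\in I$, and the weakly $(m,n)$-closed $\gamma\circ\delta$-primary hypothesis yields $a^{n}\in\gamma(\delta(I))$ at once. This is the easy case.

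The substantive case is $a^{m}=0$, and this is exactly where the hypothesis on $\delta(0)$ is used. Here $a^{m}=0\in\delta(0)$, and since $\delta(0)$ is an $(m,n)$-closed $\gamma$-primary ideal of $R$ we get $a^{n}\in\gamma(\delta(0))$. Now $(0)\subseteq I$ and $\delta$ is an expansion function, so $\delta(0)\subseteq\delta(I)$; applying monotonicity of $\gamma$ gives $\gamma(\delta(0))\subseteq\gamma(\delta(I))$, whence $a^{n}\in\gamma(\delta(I))=(\gamma\circ\delta)(I)$. Combining the two cases shows that $a^{m}\in I$ implies $a^{n}\in(\gamma\circ\delta)(I)$ for every $a\in R$, i.e. $I$ is an $(m,n)$-closed $\gamma\circ\delta$-primary ideal of $R$.

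I do not anticipate a real obstacle here; the argument is short. The only point requiring care is the zero case: one must pass the element $a$ through $\delta(0)$ (not through $I$ directly, since $a^m$ may fail to be nonzero in $I$) and then climb back up via the monotonicity of both $\delta$ and $\gamma$ to land in $\gamma(\delta(I))$. One could also remark in passing that the hypothesis "$\delta(0)$ is an $(m,n)$-closed $\gamma$-primary ideal" tacitly forces $\delta(0)$ to be proper, but this properness is not invoked in the computation.
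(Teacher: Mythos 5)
Your proposal is correct and follows essentially the same route as the paper: the reverse direction is immediate, and the forward direction splits on whether $a^{m}=0$, using the weakly $(m,n)$-closed hypothesis in the nonzero case and the hypothesis that $\delta(0)$ is $(m,n)$-closed $\gamma$-primary together with monotonicity of $\delta$ and $\gamma$ in the zero case. Your write-up is if anything slightly more explicit than the paper's about why $a^{m}=0$ lands in $\delta(0)$ and how the inclusion $\gamma(\delta(0))\subseteq\gamma(\delta(I))$ is obtained.
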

\begin{proof}
Assume that $I$ is a weakly $(m,n)-$closed $\gamma\circ\delta-$primary ideal of $R$.  Assume that $a^m\in~I$ for some $a\in~R$.  If $0\neq~a^m\in~I$, then $a^n\in\gamma(\delta(I))$.  Hence, assume that $a^m=0$.  Since $\delta(0)$ is an $(m,n)-$closed $\gamma-$primary ideal of $R$, we conclude that $a^n\in\gamma(\delta(0))$.  Since $\gamma(\delta(0))\subseteq\gamma(\delta(I))$, we conclude that $a^n\in\gamma(\delta(0))\subseteq\gamma(\delta(I))$.  Therefore, $I$ is an $(m,n)-$closed $\gamma\circ\delta-$primary ideal of $R$.  The converse is clear.
\end{proof}
\begin{thm}
Let $R$ be a commutative ring, $\delta$ and $\gamma$ be expansion functions of $Id(R)$, $m$ and $n$ be positive integers, $I$ be a proper ideal of $R$, and $\gamma(I)$ be a weakly $(m,n)-$closed $\delta-$primary ideal of $R$.  Then, $I$ is a weakly $(m,n)-$closed $\delta\circ\gamma-$primary ideal of $R$.
\end{thm}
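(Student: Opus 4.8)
The plan is to unwind the definition of a weakly $(m,n)$-closed $\delta\circ\gamma$-primary ideal and reduce it directly to the hypothesis on $\gamma(I)$. By the construction preceding the statement, $\delta\circ\gamma$ is an expansion function of $Id(R)$ with $(\delta\circ\gamma)(I)=\delta(\gamma(I))$, so what must be shown is: whenever $0\neq a^m\in I$ for some $a\in R$, then $a^n\in\delta(\gamma(I))$.

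First I would fix $a\in R$ with $0\neq a^m\in I$. Since $\gamma$ is an expansion function we have $I\subseteq\gamma(I)$, hence $0\neq a^m\in\gamma(I)$. Now I invoke the hypothesis that $\gamma(I)$ is a weakly $(m,n)$-closed $\delta$-primary ideal of $R$: applying its defining property to the element $a$ (whose $m$-th power is a nonzero element of $\gamma(I)$) yields $a^n\in\delta(\gamma(I))=(\delta\circ\gamma)(I)$. This is exactly the required conclusion, and since $I$ is proper by hypothesis, $I$ is a weakly $(m,n)$-closed $\delta\circ\gamma$-primary ideal of $R$.

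There is essentially no obstacle: the result is an immediate consequence of the monotonicity built into $\gamma$ (giving $I\subseteq\gamma(I)$) together with the definition of the composite expansion function. The only point deserving minimal care is bookkeeping — not to interchange $\delta\circ\gamma$ with $\gamma\circ\delta$ as in the preceding theorem, and to keep straight that $\gamma$ is applied first and $\delta$ second — but no additional case analysis (e.g.\ splitting on whether $a^m=0$) is needed here, in contrast to the earlier results where the zero case had to be handled separately.
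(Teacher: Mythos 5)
Your proof is correct and follows exactly the paper's own argument: from $0\neq a^m\in I\subseteq\gamma(I)$ one applies the hypothesis on $\gamma(I)$ to get $a^n\in\delta(\gamma(I))=(\delta\circ\gamma)(I)$. Nothing further is needed.
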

\begin{proof}
Assume that $0\neq~a^m\in~I\subseteq\gamma(I)$ for some $a\in~R$.  Since $\gamma(I)$ is a weakly $(m,n)-$closed $\delta-$primary ideal of $R$.  Then, $a^n\in\delta(\gamma(I))$.  Therefore, $I$ is a weakly $(m,n)-$closed $\delta\circ\gamma-$primary ideal of $R$.
\end{proof}
\begin{defn}\cite{m}
Let $f:R\longrightarrow~A$ be a ring homomorphism and $\delta,~\gamma$ expansion functions of $Id(R)$ and $Id(A)$ respectively.  Then, $f$ is called a $\delta\gamma-$homomorphism if $\delta(f^{-1}(I))=f^{-1}(\gamma(I))$ for all ideals $I$ of $A$.
\end{defn}
\begin{rem}
$(1)$ If $\gamma_r$ is a radical operation on a commutative ring $A$ and $\delta_r$ is a radical operation on a commutative ring $R$, then any homomorphism from $R$ to $A$ is an example of $\delta_r\gamma_r-$homomorphism.  Also, if $f$ is a $\delta\gamma-$epimorphism and $I$ is an ideal of $R$ containing $Ker(f)$, then $\gamma(f(I))=f(\delta(I))$. In particular, if $f$ is a $\delta\gamma-$ring$-$isomorphism, then $f(\delta(I))=\gamma(f(I))$ for every ideal $I$ of $R$.\\
$(2)$ Let $R$ be a commutative ring, $\delta$ be an expansion function of $Id(R)$ and $I$ be a proper ideal of $R$.  The function $\delta_q:R/I\longrightarrow~R/I$ defined by $\delta_q(J/I)=\delta(J)/I$ for ideals $I\subseteq~J$, becomes an expansion function of $R/I$.
\end{rem}
\begin{thm}\label{3}
Let $R$ and $A$ be commutative rings, $m$ and $n$ be positive integers, and $f:R\longrightarrow~A$ be a $\delta\gamma-$homomorphism, where $\delta$ is an expansion function of $Id(R)$ and $\gamma$ is an expansion function of $Id(A)$.\\
\begin{enumerate}
\item[(1)] If $f$ is injective and $J$ is a weakly $(m,n)-$closed $\gamma-$primary ideal of $A$, then $f^{-1}(J)$ is a weakly $(m,n)-$closed $\delta-$primary ideal of $R$.\\
\item[(2)] If $f$ is surjective and $I$ is a weakly $(m,n)-$closed $\delta-$primary ideal of $R$ containing $Ker(f)$, then $f(I)$ is a weakly $(m,n)-$closed $\gamma-$primary ideal of $A$.
\end{enumerate}
\end{thm}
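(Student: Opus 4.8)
The plan is to verify each part directly from the definition of a weakly $(m,n)-$closed $\delta-$primary ideal, transporting the "$0\neq a^m\in I\Rightarrow a^n\in\delta(I)$" condition along $f$ and its inverse image map. The only two ingredients beyond formal manipulation are the defining identity $\delta(f^{-1}(I))=f^{-1}(\gamma(I))$ of a $\delta\gamma-$homomorphism and its companion $f(\delta(I))=\gamma(f(I))$ for a $\delta\gamma-$epimorphism with $Ker(f)\subseteq I$, both recorded in the remark preceding the theorem. Properness of the image/preimage ideals is immediate in each case.

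For (1), I would take $a\in R$ with $0\neq a^m\in f^{-1}(J)$. Then $f(a)^m=f(a^m)\in J$, and injectivity of $f$ forces $f(a)^m\neq 0$, since otherwise $a^m\in Ker(f)=(0)$. As $J$ is weakly $(m,n)-$closed $\gamma-$primary, we get $f(a)^n\in\gamma(J)$, hence $a^n\in f^{-1}(\gamma(J))=\delta(f^{-1}(J))$ by the $\delta\gamma-$hypothesis. Since $1\notin J$ we have $1\notin f^{-1}(J)$, so $f^{-1}(J)$ is proper, and the conclusion follows.

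For (2), I would take $b\in A$ with $0\neq b^m\in f(I)$. Surjectivity gives $b=f(a)$ for some $a\in R$, so $f(a^m)=b^m\in f(I)$; because $Ker(f)\subseteq I$ we have $f^{-1}(f(I))=I$, whence $a^m\in I$. Moreover $a^m\neq 0$, for if $a^m=0$ then $b^m=f(a^m)=0$, contradicting $b^m\neq 0$. Thus $0\neq a^m\in I$, and weak $(m,n)-$closed $\delta-$primeness of $I$ yields $a^n\in\delta(I)$. Applying $f$ and using $f(\delta(I))=\gamma(f(I))$ (which holds here since $f$ is a $\delta\gamma-$epimorphism and $Ker(f)\subseteq I$) gives $b^n=f(a^n)\in\gamma(f(I))$. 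Properness of $f(I)$ follows from $I=f^{-1}(f(I))\neq R$, completing the argument.

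The only delicate point — and it is merely bookkeeping — is making the "$\neq 0$" condition pass correctly through $f$: in (1) this uses injectivity to pull nonvanishing back along $f$, while in (2) one observes that $f(a^m)\neq 0$ already forces $a^m\neq 0$ with no extra hypothesis. I do not anticipate any genuine obstacle; the statement is a standard transport-of-structure result once the $\delta\gamma-$homomorphism identities are in hand.
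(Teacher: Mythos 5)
Your proposal is correct and follows essentially the same route as the paper: pull the condition $0\neq a^m\in I$ back and forth along $f$, using injectivity (resp.\ $Ker(f)\subseteq I$) to preserve nonvanishing, and then invoke $\delta(f^{-1}(J))=f^{-1}(\gamma(J))$ (resp.\ $f(\delta(I))\subseteq\gamma(f(I))$). If anything, your write-up is tidier than the paper's, which contains a typo ("$Ker(f)\neq 0$" where injectivity, i.e.\ $Ker(f)=0$, is meant) and glosses over the step $f^{-1}(f(I))=I$ that you spell out.
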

\begin{proof}
\begin{enumerate}
\item[(1)] Assume that $J$ is a weakly $(m,n)-$closed $\gamma-$primary ideal of $A$ and $0\neq~a^m\in~f^{-1}(J)$ for some $a\in~R$.  Since, $Ker(f)\neq~0$, we get $0\neq~f(a^m)=[f(a)]^m\in~J$.  By our assumption, we conclude that $[f(a)]^n\in\gamma(J)$.  Thus, $a^n\in~f^{-1}(\gamma(J))$.  Since $\delta(f^{-1}(J))=f^{-1}(\gamma(J))$, we get $a^n\in\delta(f^{-1}(J))$.  Therefore, $f^{-1}(J)$ is a weakly $(m,n)-$closed $\delta-$primary ideal of $R$.
\item[(2)] Assume that $I$ is a weakly $(m,n)-$closed $\delta-$primary ideal of $R$ and $0\neq~b^m\in~f(I)$ for some $b\in~A$.  Since $f$ is epimorphism, we have $f(a^m)=b^m$ for some $a\in~R$ and $0\neq~f(a^m)=b^m\in~f(I)$.  Since $Ker(f)\subseteq~I$, we have $0\neq~a^m\in~I$.  As $I$ is an $(m,n)-$closed $\delta-$primary ideal of $R$, we have $a^n\in\delta(I)$.  Then, we have $b^n\in~f(\delta(I))\subseteq\gamma(f(I))$.  Therefore, $f(I)$ is a weakly $(m,n)-$closed $\gamma-$primary ideal of $A$.
\end{enumerate}
\end{proof}
\begin{thm}\label{80}
Let $R$ be a commutative ring, $\delta$ be an expansion function of $Id(R)$, $m$ and $n$ be positive integers, and $I\subseteq~J$ be prober ideals of $R$.
\begin{enumerate}
\item[(1)] If $J$ is a weakly $(m,n)-$closed $\delta-$primary ideal of $R$, then $J/I$ is a weakly $(m,n)-$closed $\delta_q-$primary ideal of $R/I$.
\item[(2)] If $I$ is an $(m,n)-$closed $\delta-$primary ideal of $R$ and $J/I$ is a weakly $(m,n)-$closed $\delta_q-$primary ideal of $R/I$, then $J$ is an $(m,n)-$closed $\delta-$primary ideal of $R$.
\item[(3)] If $I$ is a weakly $(m,n)-$closed $\delta-$primary ideal of $R$ and $J/I$ is a weakly $(m,n)-$closed $\delta_q-$primary ideal of $R/I$, then $J$ is a weakly $(m,n)-$closed $\delta-$primary ideal of $R$.
\end{enumerate}
\end{thm}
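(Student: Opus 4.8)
The plan is to treat all three parts uniformly by the same device: given $a\in R$ with a suitable power landing in $J$, pass to the quotient $R/I$ and split into two cases according to whether $a^m\in I$ or $a^m\notin I$. In the case $a^m\notin I$ the hypothesis on $J/I$ does the work, in the case $a^m\in I$ the hypothesis on $I$ does the work, and in both cases one transfers the conclusion back to $R$ using the chain $I\subseteq J\subseteq\delta(J)$ together with monotonicity of $\delta$ (note $\delta_q(J/I)=\delta(J)/I$ is well defined precisely because $I\subseteq J$).

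For $(1)$, suppose $\bar a:=a+I\in R/I$ satisfies $0\neq\bar a^{\,m}\in J/I$. Then $a^m\in J$ and $a^m\notin I$; since $0\in I$ the latter forces $a^m\neq 0$, so $0\neq a^m\in J$. As $J$ is weakly $(m,n)$-closed $\delta$-primary, $a^n\in\delta(J)$, hence $\bar a^{\,n}=a^n+I\in\delta(J)/I=\delta_q(J/I)$, giving the claim. For $(3)$, take $a\in R$ with $0\neq a^m\in J$. If $a^m\notin I$, then $0\neq\bar a^{\,m}\in J/I$ and the hypothesis on $J/I$ yields $\bar a^{\,n}\in\delta(J)/I$, so $a^n\in\delta(J)$ because $I\subseteq\delta(J)$. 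If instead $a^m\in I$, then $0\neq a^m\in I$ and the weakly $(m,n)$-closed $\delta$-primary hypothesis on $I$ gives $a^n\in\delta(I)\subseteq\delta(J)$ by monotonicity applied to $I\subseteq J$. Either way $a^n\in\delta(J)$. For $(2)$ the argument is identical except that the triggering hypothesis is merely $a^m\in J$ with no nonzero requirement; in the subcase $a^m\in I$ one then invokes that $I$ is (not merely weakly) $(m,n)$-closed $\delta$-primary to still conclude $a^n\in\delta(I)\subseteq\delta(J)$, while the subcase $a^m\notin I$ is verbatim as in $(3)$.

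The only points needing any care, and they are minor, are the observation that $a^m\notin I$ already forces $a^m\neq 0$ (because $0\in I$), which is what lets us legitimately feed a genuinely nonzero element into the ``weakly'' hypothesis on $J/I$ in parts $(2)$ and $(3)$, and the repeated use of $I\subseteq J$ both to make sense of $\delta_q(J/I)$ and to pull elements of $\delta(J)/I$ back to $\delta(J)$. I do not anticipate a real obstacle here; once the case split $a^m\in I$ versus $a^m\notin I$ is in place the three statements are essentially a bookkeeping exercise, and they specialize to the familiar quotient correspondence for (weakly) primary ideals when $\delta=\delta_{\sqrt{I}}$.
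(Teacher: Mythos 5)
Your proof is correct and follows essentially the same route as the paper: parts (2) and (3) are handled there by exactly your case split on $a^m\in I$ versus $a^m\notin I$, with the conclusion pulled back via $\delta(I)\subseteq\delta(J)$ and $I\subseteq\delta(J)$. The only cosmetic difference is in part (1), where the paper cites its surjective-homomorphism theorem (Theorem 3.14(2)) applied to the quotient map, while you argue directly; the content is the same, and your explicit remark that $a^m\notin I$ forces $a^m\neq 0$ is the key observation in both versions.
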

\begin{proof}
\begin{enumerate}
\item[(1)] Follows directly from Theorem \ref{3} $(2)$.\\
\item[(2)] Let $a^m\in~J$ for $a\in~R$.  If $a^m\in~I$, then $a^n\in\delta(I)\subseteq\delta(J)$.  Assume that $a^m\not\in~I$.  Then, we have $I\neq(a+I)^m\in~J/I$.  Since $J/I$ is a weakly $(m,n)-$closed $\delta_q-$primary ideal of $R/I$.  Then, $(a+I)^n=a^n+I\in\delta_q(J/I)$.  Thus, $a^n\in\delta(J)$.  Therefore, $J$ is an $(m,n)-$closed $\delta-$primary ideal of $R$.\\
\item[(3)] Let $0\neq~a^m\in~J$ for $a\in~R$.  Then by a similar argument as in part (2), one can show that $J$ is a weakly $(m,n)-$closed $\delta-$primary ideal of $R$.
\end{enumerate}
\end{proof}
\begin{Exa}
Let $D$ be an integral domain, $I$ be a proper ideal of $D$, $m,~n$ be positive integers with $1\leq~n<m$, and $\delta$ be an expansion function of $Id(D)$.  Then $<I,X>$ is a weakly $(m,n)-$closed $\delta-$primary ideal of $R[x]$ if and only if $I$ is a weakly $(m,n)-$closed $\delta-$primary ideal of $R$.  This is hold by Theorem \ref{80} $(1),~(2)$, since $<I,X>/<x>\approx~I$ in $R[x]/<x>\approx~R$.
\end{Exa}
\begin{thm}\label{30}
Let $R$ be a commutative ring, $I$ be a proper ideal of $R$, $m$ and $n$ be positive integers, $S\subseteq~R\backslash\{0\}$ be a multiplicative set with $I\cap~S=\phi$, and $\delta_S$ be an expansion function of $Id(R_S)$ such that $\delta_S(I_S)=(\delta(I))_S$ where $\delta$ is an expansion function of $Id(R)$.  If $I$ is a weakly $(m,n)-$closed $\delta-$primary ideal of $R$, then $I_S$ is a weakly $(m,n)-$closed $\delta_S-$primary ideal of $R_S$.
\end{thm}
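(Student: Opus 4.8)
The plan is to run the standard localization argument; the only delicate point is the transfer of the ``nonzero'' hypothesis along the canonical map $R\rightarrow R_S$. First I would observe that $I_S$ is a proper ideal of $R_S$: this is immediate from $I\cap S=\phi$, since $1\in I_S$ would force some element of $I$ to lie in $S$.

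Next, suppose $0\neq(a/s)^m\in I_S$ for some $a\in R$ and $s\in S$; I must produce $(a/s)^n\in\delta_S(I_S)$. Writing $(a/s)^m=a^m/s^m\in I_S$ and clearing denominators, there is $u\in S$ with $u a^m\in I$, and multiplying by $u^{m-1}$ gives $(ua)^m=u^m a^m\in I$ because $I$ is an ideal. The crucial step is to verify $(ua)^m\neq 0$ in $R$: the assumption $a^m/s^m\neq 0$ in $R_S$ means $w a^m\neq 0$ for every $w\in S$, and applying this to $w=u^m\in S$ yields precisely $(ua)^m=u^m a^m\neq 0$. Now the hypothesis that $I$ is a weakly $(m,n)$-closed $\delta$-primary ideal applies to $0\neq(ua)^m\in I$, giving $(ua)^n=u^n a^n\in\delta(I)$.

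To conclude I would push this back up to $R_S$: from $u^n a^n\in\delta(I)$ we get $a^n/1=u^n a^n/u^n\in(\delta(I))_S=\delta_S(I_S)$, and since $\delta_S(I_S)$ is an ideal of $R_S$ it absorbs multiplication by $1/s^n$, so $(a/s)^n=a^n/s^n\in\delta_S(I_S)$, as required; hence $I_S$ is a weakly $(m,n)$-closed $\delta_S$-primary ideal of $R_S$. I expect the main (really the only) obstacle to be the nonvanishing check in the middle step: one must raise $u$ to the $m$th power rather than use $u$ itself, and this is exactly where the multiplicativity of $S$ and the precise meaning of $a^m/s^m\neq 0$ in $R_S$ are both needed.
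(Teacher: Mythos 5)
Your proof is correct and follows essentially the same route as the paper's: clear denominators to get $ua^m\in I$ for some $u\in S$, pass to $(ua)^m\in I$, use $a^m/s^m\neq 0$ in $R_S$ to see $(ua)^m\neq 0$, apply the weak $(m,n)$-closed $\delta$-primary hypothesis, and push $(ua)^n\in\delta(I)$ back into $(\delta(I))_S=\delta_S(I_S)$. The paper's proof does exactly this with $u=gz$; no further comment is needed.
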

\begin{proof}
Let $0\neq~a^m\in~I_S$ for $a\in~R_S$.  Then, $a=\frac{r}{s}$ for some $r\in~R$ and $s\in~S$, and thus $0\neq~a^m=\frac{r^m}{s^m}=\frac{i}{g}$ for some $i\in~I$ and $g\in~S$. Hence, $0\neq~r^mgz=s^miz\in~I$ for some $z\in~S$, and thus $0\neq~(rgz)^m\in~I$.  Hence, $(rgz)^n\in\delta(I)$ since $I$ is weakly $(m,n)-$closed $\delta-$primary ideal of $R$, and thus $a^n=\frac{r^n}{s^n}=\frac{r^ng^nz^n}{s^ng^nz^n}\in~I_S\subseteq(\delta(I))_S=\delta_S(I_S)$.  Therefore, $I_S$ is a weakly $(m,n)-$closed $\delta_S-$primary ideal of $R_S$.
\end{proof}
\begin{cor}
Let $R$ be a commutative ring, $I$ be a proper ideal of $R$, $m$ and $n$ be positive integers, $P\subseteq~R\backslash\{0\}$ be a multiplicative set with $I\cap~P=\phi$, and $\delta_P$ be an expansion function of $Id(R_P)$ such that $\delta_P(I_P)=(\delta(I))_P$ where $\delta$ is an expansion function of $Id(R)$.  Then the following statements are equivalent.
\begin{enumerate}
\item[(1)] $I$ is a weakly $(m,n)-$closed $\delta-$primary ideal of $R$.\\
\item[(2)] $I_P$ is a weakly $(m,n)-$closed $\delta_P-$primary ideal of $R_P$ for every prime (or maximal) ideal $P$ of $R$.
\end{enumerate}
\end{cor}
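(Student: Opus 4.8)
The plan is to obtain both implications from Theorem~\ref{30} together with the classical local--global principle for membership in an ideal. For $(1)\Rightarrow(2)$ there is essentially nothing new: let $P$ be a prime (or maximal) ideal of $R$; if $I\not\subseteq P$ then $I_P=R_P$ is not proper and there is nothing to verify, so assume $I\subseteq P$ and put $S=R\setminus P$. Since $0\in P$ we have $S\subseteq R\setminus\{0\}$, and $I\cap S=\emptyset$ because $I\subseteq P$; the standing hypothesis gives $\delta_P(I_P)=(\delta(I))_P$, which is exactly the compatibility condition required by Theorem~\ref{30}. Applying that theorem with this $S$ shows that $I_P$ is a weakly $(m,n)$-closed $\delta_P$-primary ideal of $R_P$.

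For $(2)\Rightarrow(1)$, suppose $0\neq a^m\in I$ for some $a\in R$; we must show $a^n\in\delta(I)$. The key reduction is that $a^n\in\delta(I)$ if and only if $a^n/1\in(\delta(I))_P$ for every maximal ideal $P$ of $R$: if $a^n\notin\delta(I)$ then the conductor $(\delta(I):_R a^n)$ is a proper ideal, hence contained in some maximal ideal $P$, and then $a^n/1\notin(\delta(I))_P$; the converse is the usual injection $R/\delta(I)\hookrightarrow\prod_P R_P/(\delta(I))_P$. So fix a maximal ideal $P$. Since $a^m\in I$ we get $a^m/1\in I_P$, and $(\delta(I))_P=\delta_P(I_P)$. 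If $a^m/1\neq0$ in $R_P$, then $0\neq(a/1)^m\in I_P$, and the assumption that $I_P$ is weakly $(m,n)$-closed $\delta_P$-primary yields $a^n/1=(a/1)^n\in\delta_P(I_P)=(\delta(I))_P$, which is what we want.

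The hard part will be the remaining case, in which $a^m/1=0$ in $R_P$ for some maximal ideal $P$ while $a^n/1\notin(\delta(I))_P$; because we only assume $n<m$, the vanishing of $a^m/1$ does not force $a^n/1=0$, and this is the familiar failure of ``weakly'' conditions to localize. I would handle it by reorganizing $(2)\Rightarrow(1)$ as a contradiction argument: if $I$ is not weakly $(m,n)$-closed $\delta$-primary, choose a witness $a$ with $0\neq a^m\in I$ and $a^n\notin\delta(I)$, and look for a single prime ideal $P$ with $\mathrm{Ann}_R(a^m)+(\delta(I):_R a^n)\subseteq P$; for such a $P$ one has $a^m/1\neq0$ and $a^n/1\notin(\delta(I))_P$, so $I_P$ fails the property, contradicting $(2)$. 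This works as long as the displayed sum of ideals is proper, and I expect that is where the real content (or a needed extra hypothesis) lies; it is automatic, for instance, when $\delta$ is of radical type such as $\delta_{\sqrt{I}}$, since then $a^m/1=0$ forces $a/1\in\mathrm{Nil}(R_P)\subseteq\sqrt{I_P}=\delta_P(I_P)$, and more generally it should be imposed in the spirit of the $\delta$-$(m,n)$-unbreakable-zero conditions used elsewhere in the paper. The equivalence with localizations at maximal ideals rather than all primes is then the routine observation that a module element vanishes iff it vanishes in every localization at a maximal ideal.
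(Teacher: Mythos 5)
Your argument for $(1)\Rightarrow(2)$ is correct and coincides with the paper's: it is an immediate application of the localization theorem proved just before this corollary, with $S=R\setminus P$, after discarding the primes with $I\not\subseteq P$. For $(2)\Rightarrow(1)$, however, you have located the difficulty without resolving it, so the proposal is not a complete proof. For comparison, the paper's argument is essentially the conductor argument you sketch: it forms $J=\{r\in R: ra^n\in\delta(I)\}$, notes $J\not\subseteq L$ for primes $L\not\supseteq I$ (because $Ia^n\subseteq I\subseteq\delta(I)$), and for primes $P\supseteq I$ it simply asserts that $0\neq(\frac{a}{1})^m\in I_P$ before invoking the hypothesis to get $xa^n\in\delta(I)$ for some $x\notin P$, whence $J=R$ and $a^n\in\delta(I)$. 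That assertion is exactly the step you flagged: $a^m\neq 0$ in $R$ does not force $a^m/1\neq 0$ in $R_P$, and when $a^m/1=0$ the weak hypothesis on $I_P$ yields nothing. So the paper does not fill the gap either; it passes over it.

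Your suspicion that an additional hypothesis is genuinely needed is correct, because $(2)\Rightarrow(1)$ fails as stated. Take $R=\mathbb{Z}_8\times\mathbb{Z}$, $I=\{0,4\}\times 2\mathbb{Z}$, $m=3$, $n=1$, and let $\delta$ and all the $\delta_P$ be the identity expansions, so that the compatibility condition $\delta_P(I_P)=(\delta(I))_P$ holds trivially. The only primes $P$ for which $I_P$ is proper are $P_1=2\mathbb{Z}_8\times\mathbb{Z}$ and $P_2=\mathbb{Z}_8\times 2\mathbb{Z}$; one computes $R_{P_1}\cong\mathbb{Z}_8$ with $I_{P_1}=\{0,4\}$, which is weakly $(3,1)$-closed because no nonzero cube in $\mathbb{Z}_8$ lies in $\{0,4\}$ (cf.\ Example 3.5), and $R_{P_2}\cong\mathbb{Z}_{(2)}$ with $I_{P_2}=2\mathbb{Z}_{(2)}$ prime, hence $(3,1)$-closed. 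Yet $a=(2,2)$ satisfies $0\neq a^3=(0,8)\in I$ while $a\notin I=\delta(I)$, so $I$ is not weakly $(3,1)$-closed $\delta_I$-primary. Here $\mathrm{Ann}_R(a^3)+(\delta(I):_R a)=(\mathbb{Z}_8\times\{0\})+(2\mathbb{Z}_8\times\mathbb{Z})=R$, which is precisely the obstruction you isolated: the witness $a^3$ dies in the one localization that could detect $a\notin\delta(I)$. So your proposed repair (requiring $\mathrm{Ann}_R(a^m)+(\delta(I):_R a^n)$ to be proper, or a hypothesis of unbreakable-zero type) is indeed where the missing content lies, and your write-up should be recast as identifying an error in the statement rather than as a proof of it.
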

\begin{proof}
$(1)\Longrightarrow(2)$ Follows directly from Theorem \ref{30}.\\
$(2)\Longrightarrow(1)$ Let $0\neq~a^m\in~I$ for $a\in~R$, consider the ideal $J=\{r\in~R:ra^n\in\delta(I)\}$ of $R$ and $P$ be a prime ideal of $R$ with $I\subseteq~P$.  Then, $0\neq(\frac{a}{1})^m\in~I_P$.  Thus, $(\frac{a}{1})^n\in\delta_P(I_P)$ since $I_P$ is a weakly $(m,n)-$closed $\delta_P-$primary ideal of $R_P$.  Thus, $xa^n\in\delta(I)$ for some $x\in~R\backslash~P$.  Thus, $J\nsubseteq~P$.  In fact $J\nsubseteq~L$ for every prime ideal $L$ of $R$ with $I\nsubseteq~L$.  Hence, $J=R$ and thus $a^n\in\delta(I)$.  Therefore,  $I$ is a weakly $(m,n)-$closed $\delta-$primary ideal of $R$.
\end{proof}
Recall from \cite{m} that an expansion function $\delta$ of $Id(R)$ satisfies the finite intersection property if $\delta(I_1\cap...\cap~I_n)=\delta(I_1)\cap...\cap\delta(I_n)$ for some ideals $I_1,...,I_n$ of the commutative ring $R$.\\
Note that the radical operation on ideals of a commutative ring is an example of an expansion function satisfying the finite intersection property.\\
\begin{thm}\label{11}
Let $R$ be a commutative ring, $\delta$ be an expansion function of $Id(R)$ satisfying the finite intersection property, $m$ and $n$ be positive integers, and $I_1,...,I_k$ be proper ideals of $R$.  If $I_1,...,I_k$ are weakly $(m,n)-$closed $\delta-$primary ideal of $R$, and $P=\delta(I_j)$ for all $j\in\{1,...,k\}$, then $I_1\cap...\cap~I_k$ is a weakly $(m,n)-$closed $\delta-$primary ideal of $R$.
\end{thm}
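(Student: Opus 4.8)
The plan is to verify the defining condition of a weakly $(m,n)$-closed $\delta$-primary ideal directly for $I := I_1 \cap \cdots \cap I_k$. First note that $I$ is a proper ideal of $R$, since $I \subseteq I_1 \subsetneq R$; thus only the absorbing-type requirement needs to be checked.

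Suppose $0 \neq a^m \in I$ for some $a \in R$. Then $0 \neq a^m \in I_j$ for every $j \in \{1,\dots,k\}$, and since each $I_j$ is a weakly $(m,n)$-closed $\delta$-primary ideal of $R$ we obtain $a^n \in \delta(I_j) = P$. Hence $a^n \in \bigcap_{j=1}^{k} \delta(I_j) = P$.

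Next I would invoke the finite intersection property of $\delta$, which yields $\delta(I) = \delta(I_1 \cap \cdots \cap I_k) = \delta(I_1) \cap \cdots \cap \delta(I_k) = P$. Combining this with the previous step gives $a^n \in \delta(I)$, which is exactly what is required; therefore $I$ is a weakly $(m,n)$-closed $\delta$-primary ideal of $R$.

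The argument is short and presents no serious obstacle. The only genuinely non-formal input is the identity $\delta\bigl(\bigcap_j I_j\bigr) = \bigcap_j \delta(I_j)$, supplied by the finite intersection property; the hypothesis $\delta(I_j) = P$ for all $j$ merely lets one rewrite this intersection as the single ideal $P$. It is worth stressing that monotonicity of $\delta$ alone gives only $\delta(I) \subseteq \delta(I_j)$ for each $j$, i.e.\ the inclusion in the wrong direction, so the finite intersection property is essential to the conclusion.
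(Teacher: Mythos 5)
Your proof is correct and is exactly the argument the paper has in mind; the paper simply dismisses this theorem with ``It is clear,'' and you have supplied the missing details (properness of the intersection, the pointwise application of the hypothesis to each $I_j$, and the use of the finite intersection property to identify $\bigcap_j \delta(I_j)$ with $\delta(I_1\cap\cdots\cap I_k)$). Your closing observation is also accurate: once the finite intersection property is invoked, the hypothesis $P=\delta(I_j)$ for all $j$ is not actually needed, since $a^n\in\bigcap_j\delta(I_j)=\delta(I_1\cap\cdots\cap I_k)$ follows without it.
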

\begin{proof}
It is clear.
\end{proof}
\begin{defn}
Let $R$ be a commutative ring, $\delta$ be an expansion function of $Id(R)$, $m$ and $n$ be integers with $1\leq~n<m$, and $I$ be a weakly $(m,n)-$ closed $\delta-$primary ideal of $R$.  Then $a\in~R$ is called a $\delta-(m,n)-$unbreakable-zero element of $I$ if $a^m=0$ and $a^n\not\in\delta(I)$.  (Thus, $I$ has a $\delta-(m,n)-$unbreakable-zero element if and only if $I$ is not $(m,n)-$closed $\delta-$primary ideal of $R$).
\end{defn}
\begin{thm}\label{81}
Let $R$ be a commutative ring, $\delta$ be an expansion function of $Id(R)$, $m$ and $n$ be integers with $1\leq~n<m$, and $I$ be a weakly $(m,n)-$ closed $\delta-$primary ideal of $R$.  If $x$ is a $\delta-(m,n)-$unbreakable-zero element of $I$, then $(x+i)^m=0$ for every $i\in~I$.
\end{thm}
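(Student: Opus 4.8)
The plan is a short argument by contradiction using the binomial theorem together with the two defining facts about the unbreakable-zero element $x$, namely $x^{m}=0$ and $x^{n}\notin\delta(I)$.

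First I would record that $(x+i)^{m}\in I$ for every $i\in I$. Expanding $(x+i)^{m}=\sum_{k=0}^{m}\binom{m}{k}x^{k}i^{m-k}$ (legitimate since $R$ is commutative), the top term $k=m$ equals $x^{m}=0\in I$, while every other term with $0\le k\le m-1$ carries a factor $i^{m-k}$ with $m-k\ge 1$, hence lies in $I$; as $I$ is an ideal, $(x+i)^{m}\in I$.

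Next, suppose toward a contradiction that $(x+i)^{m}\neq 0$ for some $i\in I$. Then $0\neq(x+i)^{m}\in I$, so because $I$ is weakly $(m,n)$-closed $\delta$-primary, applying the definition with $a=x+i$ gives $(x+i)^{n}\in\delta(I)$. Expanding again, $(x+i)^{n}=x^{n}+\sum_{k=0}^{n-1}\binom{n}{k}x^{k}i^{n-k}$, and each summand in the last sum lies in $I\subseteq\delta(I)$, using the defining inclusion $I\subseteq\delta(I)$ for an expansion function. Since $\delta(I)$ is an ideal, subtracting these terms from $(x+i)^{n}\in\delta(I)$ forces $x^{n}\in\delta(I)$, contradicting that $x$ is a $\delta$-$(m,n)$-unbreakable-zero element of $I$. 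Therefore $(x+i)^{m}=0$ for every $i\in I$.

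There is essentially no serious obstacle here; the only points needing care are that the binomial expansions are valid (commutativity of $R$), that $\delta(I)$ is genuinely an ideal so the "impure" terms may be cancelled, and that $I\subseteq\delta(I)$ so those impure terms actually land in $\delta(I)$.
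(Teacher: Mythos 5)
Your proof is correct and follows essentially the same route as the paper's: expand $(x+i)^m$ by the binomial theorem, note it lies in $I$, and derive a contradiction with $x^n\notin\delta(I)$. In fact you make explicit a step the paper leaves unstated --- namely why $(x+i)^n\in\delta(I)$ is contradictory, which requires expanding $(x+i)^n$, observing the lower-order terms lie in $I\subseteq\delta(I)$, and concluding $x^n\in\delta(I)$ --- so your write-up is the more complete of the two.
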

\begin{proof}
Assume that $(x+i)^m\neq0$ for some $i\in~I$.  Then by Binomial Theorem, we have $(x+i)^m=x^m+mx^{m-1}i+...+i^m=0+mx^{m-1}i+...+i^m\in~I$.  Since $I$ is a weakly $(m,n)-$ closed $\delta-$primary ideal of $R$, then $(x+i)^n\in\delta(I)$, a contradiction.  Therefore, $(x+i)^m=0$ for every $i\in~I$.
\end{proof}
\begin{thm}
Let $R$ be a commutative ring, $\delta$ be an expansion function of $Id(R)$, $m$ and $n$ be positive integers, and $I$ be a weakly $(m,n)-$ closed $\delta-$primary ideal of $R$ that is not $(m,n)-$ closed $\delta-$primary.  Then $I\subseteq~Nil(R)$.  Furthermore, if $char(R)=m$ is prime, then $x^m=0$ for every $x\in~I$.
\end{thm}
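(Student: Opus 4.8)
The plan is to locate, inside a weakly $(m,n)$-closed $\delta$-primary ideal $I$ that fails to be $(m,n)$-closed $\delta$-primary, one distinguished nilpotent element, and then to transfer its nilpotence to every element of $I$ by means of Theorem \ref{81}.

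I would begin with a preliminary observation: the hypothesis that $I$ is not $(m,n)$-closed $\delta$-primary already forces $1\le n<m$, because when $m\le n$ any proper ideal $I$ satisfies $a^m\in I\Rightarrow a^n=a^{n-m}a^m\in I\subseteq\delta(I)$ and would therefore be $(m,n)$-closed $\delta$-primary; hence Theorem \ref{81} is applicable. Next, since $I$ is weakly $(m,n)$-closed $\delta$-primary but not $(m,n)$-closed $\delta$-primary, there is an element $x\in R$ with $x^m\in I$ and $x^n\notin\delta(I)$. If $x^m$ were nonzero, the weakly $(m,n)$-closed $\delta$-primary property would give $x^n\in\delta(I)$, a contradiction; hence $x^m=0$, so that $x$ is a $\delta$-$(m,n)$-unbreakable-zero element of $I$ (precisely the remark recorded right after that definition) and, in particular, $x\in Nil(R)$.

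Now, for an arbitrary $a\in I$, I would apply Theorem \ref{81} with $i=a$ to get $(x+a)^m=0$, so $x+a\in Nil(R)$. Since the nilradical of a commutative ring is an ideal, $a=(x+a)-x\in Nil(R)$; explicitly, expanding $a^{2m-1}=\left((x+a)-x\right)^{2m-1}$ by the Binomial Theorem, every term carries a factor $(x+a)^k$ with $k\ge m$ or a factor $x^{2m-1-k}$ with $2m-1-k\ge m$, so $a^{2m-1}=0$. As $a$ was arbitrary, $I\subseteq Nil(R)$. For the last assertion, suppose $char(R)=m$ with $m$ prime. Then $m\cdot 1=0$ in $R$ and $\binom{m}{k}\equiv 0\pmod m$ for $0<k<m$, so $(u+v)^m=u^m+v^m$ holds for all $u,v\in R$. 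Taking $u=x$ and $v=a$ with $a\in I$, and combining $x^m=0$ with $(x+a)^m=0$ from Theorem \ref{81}, we obtain $0=(x+a)^m=x^m+a^m=a^m$, so $a^m=0$ for every $a\in I$, as claimed.

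I do not anticipate a serious obstacle: the whole argument reduces to Theorem \ref{81} together with the facts that the nilpotent elements of a commutative ring form an ideal and that $\binom{m}{k}$ is divisible by $m$ when $m$ is prime and $0<k<m$. The two places that genuinely need a sentence of justification are the passage to the existence of a $\delta$-$(m,n)$-unbreakable-zero element $x$ (which is what lets Theorem \ref{81} be used at all) and the freshman's-dream identity invoked in the characteristic-$m$ case.
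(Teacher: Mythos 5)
Your proposal is correct and follows essentially the same route as the paper: produce a $\delta$-$(m,n)$-unbreakable-zero element $x$, use Theorem \ref{81} to get $(x+a)^m=0$ for each $a\in I$, conclude $a\in Nil(R)$ since the nilradical is an ideal, and invoke the freshman's dream when $char(R)=m$ is prime. Your extra care in checking that $n<m$ is forced (so Theorem \ref{81} applies) and in justifying the existence of the unbreakable-zero element fills in details the paper leaves implicit, but the argument is the same.
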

\begin{proof}
Since $I$ is a weakly $(m,n)-$ closed $\delta-$primary ideal of $R$ that is not $(m,n)-$ closed $\delta-$primary, then $I$ has a $\delta-(m,n)-$unbreakable-zero element say $a$.  Let $x\in~I$.  Then, $a^m=0$ and $(a+x)^m=0$ by Theorem \ref{81}.  Thus, $a,~a+x\in~Nil(R)$.  Thus, $x=(a+x)-a\in~Nil(R)$.  Therefore, $I\subseteq~Nil(R)$.  Now, assume that $char(R)=m$ is prime.  Then, it is clear that $0=(a+x)^m=a^m+x^m=x^m$.  Therefore, $x^m=0$ for every $x\in~I$.
\end{proof}
\begin{lem}
Let $R$ be a commutative ring, $\delta$ be an expansion function of $Id(R)$, $n$ be a positive integer and $a\in~J(R)$. Then the ideal $I=a^{n+1}R$ is a weakly semi$-n-$absorbing $\delta-$primary ideal of $R$ if and only if $a^{n+1}=0$.
\end{lem}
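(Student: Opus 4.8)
The plan is to handle the two implications of the biconditional separately; the reverse implication is essentially vacuous, while the forward implication reduces to a unit-cancellation argument in which the hypothesis $a\in J(R)$ does the real work.

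For the reverse implication, suppose $a^{n+1}=0$. Then $I=a^{n+1}R=(0)$, which is a proper ideal since $1\neq 0$. Now $(0)$ is weakly semi-$n$-absorbing $\delta$-primary for the trivial reason that no $x\in R$ satisfies $0\neq x^{n+1}\in(0)$, so the implication defining the notion is never activated; equivalently, $(0)$ is a weakly $(n+1,n)$-closed $\delta$-primary ideal.

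For the forward implication, assume $I=a^{n+1}R$ is weakly semi-$n$-absorbing $\delta$-primary and suppose, toward a contradiction, that $a^{n+1}\neq 0$. Applying the defining property with $x=a$ (note that $0\neq a^{n+1}\in I$ is an $(n+1)$-st power) gives $a^{n}\in\delta(I)$; in the present situation this means $a^{n}=a^{n+1}r$ for some $r\in R$. Rearranging yields $a^{n}(1-ar)=0$. Since $a\in J(R)$ and $J(R)$ is an ideal, $ar\in J(R)$, so $1-ar$ is a unit of $R$; multiplying through by its inverse gives $a^{n}=0$, whence $a^{n+1}=a\cdot a^{n}=0$, contradicting the assumption. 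Therefore $a^{n+1}=0$.

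The substantive step, and the only place where $a\in J(R)$ is used, is cancelling the unit $1-ar$ from $a^{n}(1-ar)=0$; everything else is formal. The one bookkeeping point to watch --- and the place where a little care is needed --- is the passage from $a^{n}\in\delta(I)$ to the equation $a^{n}=a^{n+1}r$, which is immediate once $\delta$ fixes $I=a^{n+1}R$, in particular for the identity expansion function, under which, by the remark above, this property coincides with being a weakly semi-$n$-absorbing ideal.
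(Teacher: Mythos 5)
Your proof takes exactly the same route as the paper's: the reverse direction is vacuous because $I=(0)$, and the forward direction produces $a^{n}=a^{n+1}r$, rewrites this as $a^{n}(1-ar)=0$, and cancels the unit $1-ar$, which is legitimate because $ar\in J(R)$. The step you single out at the end --- getting from $a^{n}\in\delta(I)$ to $a^{n}\in I=a^{n+1}R$ --- is not mere bookkeeping, however: it is a genuine gap, and it is present in the paper's own proof as well, which after deriving $a^{n}\in\delta(I)$ simply writes ``Set $x=a^{n}\in I$'' with no justification. For a general expansion function the forward implication is in fact false. Take $R=\mathbb{Z}_{(p)}$, $a=p\in J(R)$, and $\delta=\delta_{\sqrt{I}}$. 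Then $I=p^{n+1}R$ has $\delta(I)=\sqrt{I}=pR$, and any $x$ with $0\neq x^{n+1}\in I$ must lie in $pR$ (otherwise $x$, hence $x^{n+1}$, would be a unit), so $x^{n}\in p^{n}R\subseteq pR=\delta(I)$; thus $I$ is a weakly semi-$n$-absorbing $\delta$-primary ideal although $p^{n+1}\neq0$. (The expansion function $\delta(I)=R$ for every $I$ gives an even cruder counterexample.) So the lemma is correct only under an additional hypothesis such as $\delta=\delta_{I}$, or more generally that $a^{n}\in\delta(I)$ forces $a^{n}\in I$ --- which is precisely the restriction you impose in your closing sentence. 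In short, your argument is as complete as the paper's, and the caveat you attach identifies a real defect in the statement rather than a gap in your own reasoning.
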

\begin{proof}
Assume that $a^{n+1}=0$.  Then, $I=(0)$ and thus $I$ is a weakly semi$-n-$absorbing $\delta-$primary ideal of $R$ by definition.\\
Conversely; assume that $I$ is a weakly semi$-n-$absorbing $\delta-$primary ideal of $R$ and $a^{n+1}\neq~0$.  Since $a^{n+1}\in~I\backslash\{0\}$, then $a^n\in\delta(I)$.  Set $x=a^n\in~I$.  Then, $x=xar$ for some $r\in~R$.  Thus $x(1-ar)=0$.  Since $ar\in~J(R)$, then $1-ar$ is a unit of $R$.  Thus, $x=0$ and then $a^{n+1}=0$, a contradiction.  Therefore, $a^{n+1}=0$.
\end{proof}
\begin{thm}
Let $D$ be an integral domain, $\delta$ be an expansion function of $Id(D)$, $I=p^kD$ be a principal ideal of $D$, where $p$ is a prime element of $D$ and $k$ a positive integer.  Let $m$ be a positive integer such that $m<k$, and write $k=ma+b$ for some integers $a,b$, where $a\geq1$ and $0\leq~b<m$.  If $I/p^cD$ is a weakly $(m,n)-$closed $\delta-$primary ideal of $D/p^cD$ that is not $(m,n)-$closed $\delta-$primary ideal for positive integers $n<m$ and $c\geq~k+1$, then $b\neq0$, $k+1\leq~c\leq~m(a+1)$ and $n(a+1)<k$.
\end{thm}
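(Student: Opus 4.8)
The plan is to pass to $R=D/p^{c}D$, set $\bar I=I/p^{c}D=p^{k}D/p^{c}D$, keep the name $\delta$ for the expansion function carried over to $R$, and reduce everything to bookkeeping with the $p$-adic valuation $v$ on $D$, $v(x)=\sup\{\,j\ge 0:p^{j}\mid x\,\}$. Because $p$ is a prime element of the domain $D$, $v$ is additive, $v(xy)=v(x)+v(y)$; it is a well-defined nonnegative integer (at most $c-1$) whenever $p^{c}\nmid x$, and such an $x$ factors as $x=p^{v(x)}u$ with $p\nmid u$. First I would record the translation dictionary: for $\bar y\ne 0$ in $R$ one has $\bar y^{t}=0$ iff $t\,v(y)\ge c$, and $\bar y^{t}\in\bar I$ iff $t\,v(y)\ge k$ (using $c>k$, so $p^{c}D\subseteq p^{k}D$).

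Next I would invoke the hypothesis that $\bar I$ is not $(m,n)$-closed $\delta$-primary: by the definition of a $\delta$-$(m,n)$-unbreakable-zero element there is $\bar x\in R$ with $\bar x^{m}=0$ and $\bar x^{n}\notin\delta(\bar I)$. Since $0\in\delta(\bar I)$ we get $\bar x\ne 0$, so $s:=v(x)$ is a well-defined integer with $0\le s\le c-1$ and $x=p^{s}u$, $p\nmid u$; since $\bar I\subseteq\delta(\bar I)$ we get $\bar x^{n}\notin\bar I$. The dictionary then gives $ms\ge c$ and $ns<k$. From $ms\ge c\ge k+1=ma+b+1$ we obtain $m(s-a)\ge b+1\ge 1$, hence $s\ge a+1$; consequently $n(a+1)\le ns<k$, which is the assertion $n(a+1)<k$.

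The remaining two assertions come from feeding powers of $\bar p$ into the weakly $(m,n)$-closed $\delta$-primary property of $\bar I$ (which is proper, $p$ being a non-unit). If $b=0$, then $k=ma$, and $\bar p^{\,a}$ is a legitimate test element: $(\bar p^{\,a})^{m}=\bar p^{\,k}\ne 0$ (as $k<c$) and $\bar p^{\,k}\in\bar I$, so $(\bar p^{\,a})^{n}=\bar p^{\,na}\in\delta(\bar I)$; but $s\ge a+1>a$ gives $ns\ge na$, and since $\delta(\bar I)$ is an ideal, $\bar x^{n}=\bar p^{\,ns}\bar u^{\,n}=\bar p^{\,na}\cdot\bar p^{\,ns-na}\bar u^{\,n}\in\delta(\bar I)$, contradicting $\bar x^{n}\notin\delta(\bar I)$; hence $b\ne 0$. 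For the upper bound, suppose $c>m(a+1)$. As $b\le m-1$ we have $k=ma+b<m(a+1)<c$, so $\bar p^{\,a+1}$ is a legitimate test element: $(\bar p^{\,a+1})^{m}=\bar p^{\,m(a+1)}\ne 0$ and lies in $\bar I$, hence $(\bar p^{\,a+1})^{n}=\bar p^{\,n(a+1)}\in\delta(\bar I)$; using $s\ge a+1$ once more, $\bar x^{n}=\bar p^{\,ns}\bar u^{\,n}\in\delta(\bar I)$, again a contradiction. Together with the standing $c\ge k+1$ this yields $k+1\le c\le m(a+1)$. The only delicate point is the valuation setup — well-definedness and additivity of $v$, which is exactly where primality of $p$ and the domain hypothesis enter — and the routine but necessary check that each $\bar p^{\,t}$ is a genuine witness, i.e.\ that the relevant power of $\bar p$ is nonzero in $R$, which is where $c\ge k+1$ (and, in the second case, the contrary assumption $c>m(a+1)$) is used; the rest is juggling the inequalities.
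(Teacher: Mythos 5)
Your proof is correct. It follows the same basic strategy as the paper -- test powers of $p$ against the weakly $(m,n)$-closed $\delta$-primary property and play them off against a $\delta$-$(m,n)$-unbreakable-zero element -- but your execution is considerably more careful and in fact supplies justifications the paper omits. The paper's argument for $b\neq 0$ only observes that $(p^a)^n+p^cD\notin I/p^cD$, which by itself contradicts nothing, since the defining condition concerns membership in $\delta(I/p^cD)\supseteq I/p^cD$; and the paper then simply asserts that $p^{a+1}$ itself is the unbreakable-zero witness, with no argument that the witness must be (an associate of) a power of $p$ -- in a general domain it need not be. Your valuation bookkeeping repairs both points: you take an \emph{arbitrary} unbreakable-zero element $\bar x$, show from $ms\ge c\ge ma+b+1$ that $s=v(x)\ge a+1$ (which immediately gives $n(a+1)\le ns<k$), and then transfer membership of $\bar p^{na}$ (resp.\ $\bar p^{n(a+1)}$) in $\delta(\bar I)$ to $\bar x^n=\bar p^{ns}\bar u^n$ using only that $\delta(\bar I)$ is an ideal; this yields $b\ne 0$ and $c\le m(a+1)$ by clean contradictions. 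The points you isolate as delicate -- well-definedness and additivity of the $p$-adic valuation on elements not divisible by $p^c$, and the nonvanishing of the test powers of $\bar p$ -- are exactly where primality of $p$, the domain hypothesis, and $c\ge k+1$ enter, and you handle each of them explicitly. If anything, your write-up is a strengthening of the one in the paper rather than a departure from it.
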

\begin{proof}
Assume that $I/p^cD$ is a weakly $(m,n)-$closed $\delta-$primary ideal of $D/p^cD$ that is not $(m,n)-$closed $\delta-$primary for positive integers $n<m$ and $c\geq~k+1$.  It is clear that $b\neq0$, for if $b=0$, then $0\neq(p^a)^m+p^cD\in~I/p^cD$, but $(p^a)^n+p^cD\not\in~I/p^cD$.  Since $a+1$ is the smallest positive integer such that $(p^{(a+1)})^m+p^cD\in~I/p^cD$ and $I/p^cD$ is not $(m,n)-$closed $\delta-$primary ideal of $D/p^cD$, we have $0=(p^{(a+1)})^m+p^cD\in~I/p^cD$ and $(p^{(a+1)})^n+p^cD\not\in\delta(I/p^cD)$.  Thus, $(p^{(a+1)})^n+p^cD\not\in~I/p^cD$ and therefore, $n(a+1)<k$ and $k+1\leq~c\leq~m(a+1)$.
\end{proof}
Recall that an ideal of $R_1\times~R_2$ has the form $I_1\times~I_2$ for some ideals $I_1$ of $R_1$ and $I_2$ of $R_2$, where $R_1$ and $R_2$ are commutative rings.  Let $R=R_1\times...\times~R_k$, where $R_i$ is a commutative ring with nonzero identity and $\delta_i$ be an expansion function of $Id(R_i)$ for each $i\in\{1,2,...,k\}$.  Let $\delta_\times$ be a function of $Id(R)$, which is defined by $\delta_\times(I_1\times~I_2\times...\times~I_k)=\delta_1(I_1)\times\delta_2(I_2)\times...\times\delta_k(I_k)$ for each ideal $I_i$ of $R_i$ were $i\in\{1,2,...,k\}$.  Then $\delta_\times$ is an expansion function of $Id(R)$.  Note that every ideal of $R$ is of the form $I_1\times~I_2\times...\times~I_k$, where each ideal $I_i$ is an ideal of $R_i$, $i\in\{1,2,...,k\}$.
In the next results, we characterize weakly $(m,n)-$closed $\delta-$primary ideals of $R_1\times...\times~R_k$.
\begin{thm}\label{4}
Let $R=R_1\times~R_2$, where $R_1$ and $R_2$ are commutative rings, $\delta_i$ be an expansion function of $Id(R_i)$ for each $i\in\{1,2\}$, $I_1$ be a proper ideal of $R_1$, $I_2$ be a proper ideal of $R_2$, and $m$ and $n$ be positive integers with $1\leq~n<m$.  Then the following statements are equivalent.
\begin{enumerate}
\item[(1)] $I_1\times~R_2$ is a weakly $(m,n)-$closed $\delta_\times-$primary ideal of $R$.\\
\item[(2)] $I_1$ is an $(m,n)-$closed $\delta_1-$primary ideal of $R_1$.\\
\item[(3)] $I_1\times~R_2$ is an $(m,n)-$closed $\delta_\times-$primary ideal of $R$.
\end{enumerate}
\end{thm}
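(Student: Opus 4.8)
The plan is to establish the cycle $(3)\Rightarrow(1)\Rightarrow(2)\Rightarrow(3)$. The implication $(3)\Rightarrow(1)$ needs nothing new: every $(m,n)$-closed $\delta$-primary ideal is by definition weakly $(m,n)$-closed $\delta$-primary (as recorded in the remark above). The whole point of the theorem is the simple structural fact that $I_1\times R_2$ is a proper ideal of $R$ (because $I_1\neq R_1$) that strictly contains the zero ideal $0\times 0$ (because $R_2$ has nonzero identity, so $(0,1)\in I_1\times R_2$ is a nonzero element); this is exactly what makes the weak hypothesis as strong as the ordinary one in this setting.

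For $(1)\Rightarrow(2)$, suppose $I_1\times R_2$ is weakly $(m,n)$-closed $\delta_\times$-primary and take $a\in R_1$ with $a^m\in I_1$. Apply the hypothesis to the element $x=(a,1)\in R$: one has $x^m=(a^m,1)\in I_1\times R_2$, and $x^m\neq(0,0)$ since its second coordinate is $1\neq 0$, so the weak condition is not vacuous and yields $x^n=(a^n,1)\in\delta_\times(I_1\times R_2)=\delta_1(I_1)\times\delta_2(R_2)$. Comparing first coordinates gives $a^n\in\delta_1(I_1)$, so $I_1$ is $(m,n)$-closed $\delta_1$-primary.

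For $(2)\Rightarrow(3)$, suppose $I_1$ is $(m,n)$-closed $\delta_1$-primary and let $(a,b)\in R$ with $(a,b)^m=(a^m,b^m)\in I_1\times R_2$. Then $a^m\in I_1$, hence $a^n\in\delta_1(I_1)$; and since $\delta_2$ is an expansion function, $R_2\subseteq\delta_2(R_2)\subseteq R_2$ forces $\delta_2(R_2)=R_2$, so $b^n\in\delta_2(R_2)$ trivially. Therefore $(a,b)^n=(a^n,b^n)\in\delta_1(I_1)\times\delta_2(R_2)=\delta_\times(I_1\times R_2)$, i.e. $I_1\times R_2$ is $(m,n)$-closed $\delta_\times$-primary. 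The only step worth pausing over is the choice of the test element $(a,1)$ in $(1)\Rightarrow(2)$ — it is precisely the unit in the second slot that guarantees a nonzero $m$-th power, so that the weak primary condition actually applies — but beyond that the argument is a routine coordinatewise computation using $\delta_\times(I_1\times R_2)=\delta_1(I_1)\times\delta_2(R_2)$ and $\delta_2(R_2)=R_2$; no genuine obstacle arises, and the standing convention $1\le n<m$ is not even needed.
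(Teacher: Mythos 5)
Your proof is correct and uses essentially the same key idea as the paper: the test element $(a,1)$, whose $m$-th power is nonzero because of the $1$ in the second coordinate, is exactly what the paper's argument for $(1)\Rightarrow(2)$ relies on (phrased there via a $\delta_1$-$(m,n)$-unbreakable-zero element and a contradiction). Your version is just a slightly more direct write-up, and it correctly fills in the steps the paper dismisses as ``clear.''
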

\begin{proof}
$(1)\Longrightarrow(2)$ $I_1$ is a weakly $(m,n)-$closed $\delta_1-$primary ideal of $R_1$ by Theorem \ref{3} (2).  If $I_1$ is not an $(m,n)-$closed $\delta_1-$primary ideal of $R_1$, then $I_1$ has a $\delta_1-(m,n)-$unbreakable-zero element $x$.  Thus, $(0,0)\neq(x,1)^m\in~I_1\times~R_2$, but $(x,1)^n\not\in\delta_\times(I_1\times~R_2)$, a contradiction.  Hence, $I_1$ is an $(m,n)-$closed $\delta_1-$primary ideal of $R_1$.\\
$(2)\Longrightarrow(3)$  It is clear.  (Analogue to the proof of \cite{a}, Theorem 2.12).\\
$(3)\Longrightarrow(1)$  It is clear by definition.
\end{proof}
\begin{thm}
Let $R=R_1\times~R_2$, where where $R_1$ and $R_2$ are commutative rings, $\delta_i$ be an expansion function of $Id(R_i)$ for each $i\in\{1,2\}$, $I$ be a proper ideal of $R$, and $m$ and $n$ be positive integers with $1\leq~n<m$.  Then the following statements are equivalent.
\begin{enumerate}
\item[(1)] $I$ is a weakly $(m,n)-$closed $\delta_\times-$primary ideal of $R$ that is not $(m,n)-$closed $\delta_\times-$primary.\\
\item[(2)] $I=J_1\times~J_2$ for some proper ideals $J_1$ of $R_1$ and $J_2$ of $R_2$ such that either\\
 (a) $J_1$ is a weakly $(m,n)-$closed $\delta_1-$primary ideal of $R_1$ that is not $(m,n)-$closed $\delta_1-$primary, $y^m=0$ whenever $y^m\in~J_2$ for some $y\in~R_2$ (in particular, $j^m=0~\forall~j\in~J_2$), and if $0\neq~x^m\in~J_1$ for some $x\in~R_1$, then $J_2$ is an $(m,n)-$closed $\delta_2-$primary ideal of $R_2$, or\\
 (b) $J_2$ is a weakly $(m,n)-$closed $\delta_2-$primary ideal of $R_2$ that is not $(m,n)-$closed $\delta_2-$primary, $y^m=0$ whenever $y^m\in~J_1$ for some $y\in~R_1$ (in particular, $j^m=0~\forall~j\in~J_1$), and if $0\neq~x^m\in~J_2$ for some $x\in~R_2$, then $J_1$ is an $(m,n)-$closed $\delta_1-$primary ideal of $R_1$.
\end{enumerate}
\end{thm}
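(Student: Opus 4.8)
The plan is to exploit that every ideal of $R=R_1\times R_2$ has the form $J_1\times J_2$, together with the identities $\delta_\times(J_1\times J_2)=\delta_1(J_1)\times\delta_2(J_2)$ and $(x,y)^k=(x^k,y^k)$, and to transfer the two hypotheses back and forth by testing on elements of the special shapes $(x,0)$, $(a,y)$ and $(x,z)$. So I would first write $I=J_1\times J_2$, noting that properness of each factor will fall out automatically along the way.

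For $(1)\Rightarrow(2)$: since $I$ is not $(m,n)$-closed $\delta_\times$-primary it has a $\delta_\times$-$(m,n)$-unbreakable-zero element $(a,b)$, so $(a^m,b^m)=(0,0)$ and $(a^n,b^n)\notin\delta_1(J_1)\times\delta_2(J_2)$; at least one coordinate must fail, and by the symmetry between the two factors (and between (a) and (b)) I may assume $a^m=0$ and $a^n\notin\delta_1(J_1)$. I then want to verify clause (a). Testing $0\neq x^m\in J_1$ via $(0,0)\neq(x,0)^m\in I$ yields $x^n\in\delta_1(J_1)$, so $J_1$ is weakly $(m,n)$-closed $\delta_1$-primary; it is proper because $J_1=R_1$ would put $a^n$ in $\delta_1(R_1)=R_1$, and it is \emph{not} $(m,n)$-closed $\delta_1$-primary since $a$ is a $\delta_1$-$(m,n)$-unbreakable-zero element of it. Testing $(a,y)^m\in I$ for $y\in R_2$ with $0\neq y^m\in J_2$ would force $a^n\in\delta_1(J_1)$, a contradiction, so $y^m=0$ whenever $y^m\in J_2$ (hence $j^m=0$ for all $j\in J_2$, and $J_2$ is proper). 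Finally, if $0\neq x^m\in J_1$, then for any $z\in R_2$ with $z^m\in J_2$ the element $(x^m,z^m)\neq(0,0)$ lies in $I$, giving $z^n\in\delta_2(J_2)$; thus $J_2$ is an $(m,n)$-closed $\delta_2$-primary ideal, completing (a).

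For $(2)\Rightarrow(1)$: assume (a) holds, the case (b) being symmetric. Picking $a\in R_1$ with $a^m=0$ and $a^n\notin\delta_1(J_1)$ (available since $J_1$ is not $(m,n)$-closed $\delta_1$-primary), the element $(a,0)$ satisfies $(a,0)^m=(0,0)\in I$ and $(a,0)^n\notin\delta_\times(I)$, so $I$ is not $(m,n)$-closed $\delta_\times$-primary. For weak $(m,n)$-closedness, take $(0,0)\neq(x^m,y^m)\in I$: if $x^m=0$ then $y^m\neq0$ and $y^m\in J_2$, contradicting the clause forcing $y^m=0$, so necessarily $x^m\neq0$; then $0\neq x^m\in J_1$ gives $x^n\in\delta_1(J_1)$ by weak $(m,n)$-closedness of $J_1$, while the remaining clause makes $J_2$ an $(m,n)$-closed $\delta_2$-primary ideal, whence $y^m\in J_2$ gives $y^n\in\delta_2(J_2)$; thus $(x^n,y^n)\in\delta_\times(I)$, as needed.

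I do not anticipate a genuine obstacle; the argument is an exercise in case bookkeeping. The one delicate point is to notice that, in $(2)\Rightarrow(1)$, the clause forcing $y^m=0$ on the "other" ideal is exactly what makes the $x^m=0$ branch impossible, and, dually, that in $(1)\Rightarrow(2)$ the correct test element for recovering that clause is $(a,y)$ rather than $(x,0)$. One could alternatively invoke Theorem~\ref{4} to dispatch the properness of the factors $J_1$ and $J_2$, but the direct arguments above are just as short.
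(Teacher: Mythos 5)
Your proposal is correct and follows essentially the same route as the paper: decompose $I=J_1\times J_2$, extract an unbreakable-zero element, and transfer the conditions using the test elements $(x,0)$, $(a,y)$ and $(x,z)$, exactly as in the paper's proof (which also uses Theorem \ref{4} for properness, a step you note as an alternative to your direct argument). No gaps.
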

\begin{proof}
$(1)\Longrightarrow(2)$ Since $I$ is not an $(m,n)-$closed $\delta_\times-$primary ideal of $R$.  Then, by Theorem \ref{4}, we have $I=J_1\times~J_2$, where $J_1$ is a proper ideal of $R_1$ and $J_2$ is a proper ideal of $R_2$.  Since $I$ is not an $(m,n)-$closed $\delta_\times-$primary ideal of $R$, we have either $J_1$ is a weakly $(m,n)-$closed $\delta_1-$primary ideal of $R_1$ that is not $(m,n)-$closed $\delta_1-$primary or $J_2$ is a weakly $(m,n)-$closed $\delta_2-$primary ideal of $R_2$ that is not $(m,n)-$closed $\delta_2-$primary.  Suppose that $J_1$ is a weakly $(m,n)-$closed $\delta_1-$primary ideal of $R_1$ that is not $(m,n)-$closed $\delta_1-$primary.  Thus, $J_1$ has a $\delta_1-(m,n)-$unbreakable-zero element $a$.  Suppose that $y^m\in~J_2$ for some $y\in~R_2$.  Since $a$ is a $\delta_1-(m,n)-$unbreakable-zero element of $J_1$ and $(a,y)^m\in~I$, we have $(a,y)^m=(0,0)$.  Thus, $y^m=0$ (in particular, $j^m=0~\forall~j\in~J_2$).  Now, suppose that $0\neq~x^m\in~J_1$ for some $x\in~R_1$.  Let $y\in~R_2$ be such that $y^m\in~J_2$.  Then, $(0,0)\neq(x,y)^m\in~I$.  Thus, $y^n\in\delta_2(J_2)$, and hence $J_2$ is an $(m,n)-$closed $\delta_2-$primary ideal of $R_2$.  Similarly, if $J_2$ is a weakly $(m,n)-$closed $\delta_2-$primary ideal of $R_2$ that is not $(m,n)-$closed $\delta_2-$primary, then $y^m=0$ whenever $y^m\in~J_1$ for some $y\in~R_1$ (in particular, $j^m=0~\forall~j\in~J_1$), and if $0\neq~x^m\in~J_2$ for some $x\in~R_2$, then $J_1$ is an $(m,n)-$closed $\delta_1-$primary ideal of $R_1$.\\
$(2)\Longrightarrow(1)$ Assume that $J_1$ is a weakly $(m,n)-$closed $\delta_1-$primary ideal of $R_1$ that is not $(m,n)-$closed $\delta_1-$primary, $y^m=0$ whenever $y^m\in~J_2$ for some $y\in~R_2$ (in particular, $j^m=0~\forall~j\in~J_2$), and if $0\neq~x^m\in~J_1$ for some $x\in~R_1$, then $J_2$ is an $(m,n)-$closed $\delta_2-$primary ideal of $R_2$.  Let $a$ be a $\delta_1-(m,n)-$unbreakable-zero element of $J_1$.  Then, $(a,0)$ is a $\delta_\times-(m,n)-$unbreakable-zero element of $I$.  Thus, $I$ is not an $(m,n)-$closed $\delta_\times-$primary ideal of $R$.  Now, suppose that $(0,0)\neq(x,y)^m=(x^m,y^m)\in~I$ for some $x\in~R_1$ and some $y\in~R_2$.  Then, $(0,0)\neq(x,y)^m=(x^m,0)\in~I$ and $0\neq~x^m\in~J_1$.  Since $J_1$ is a weakly $(m,n)-$closed $\delta_1-$primary ideal of $R_1$ and $J_2$ is an $(m,n)-$closed $\delta_2-$primary ideal of $R_2$, we have $(x,y)^n\in\delta_\times(I)$.  Similarly, suppose that $J_2$ is a weakly $(m,n)-$closed $\delta_2-$primary ideal of $R_2$ that is not $(m,n)-$closed $\delta_2-$primary, $y^m=0$ whenever $y^m\in~J_1$ for some $y\in~R_1$ (in particular, $j^m=0~\forall~j\in~J_1$), and if $0\neq~x^m\in~J_2$ for some $x\in~R_2$, then $J_1$ is an $(m,n)-$closed $\delta_1-$primary ideal of $R_1$.  Then again, $I$ is a weakly $(m,n)-$closed $\delta_\times-$primary ideal of $R$ that is not $(m,n)-$closed $\delta_\times-$primary.
\end{proof}
Let $R$ be a commutative ring, $\delta$ be an expansion function of $Id(R)$ and $M$ be an $R-$module.  As in \cite{l}, the trivial ring extension of $R$ by $M$ (or the idealization of $M$ over $R$) is defined by $R(+)M(other~notation~is~R\propto~M)=\{(a,b):a\in~R,~b\in~M\}$ is a commutative ring with identity $(1,0)$ under addition defined by $(a,b)+(c,d)=(a+c,b+d)$ and multiplication defined by $(a,b)(c,d)=(ac,ad+bc)$ for each $a,c\in~R$ and $b,d\in~M$.  Note that $(\{0\}(+)M)^2=\{0\}$, so $\{0\}(+)M\subseteq~Nil(R(+)M)$.\\
We define a function $\delta_{(+)}:Id(R(+)M)\longrightarrow~Id(R(+)M)$ such that $\delta_{(+)}(I(+)N)=\delta(I)(+)M$ for every ideal $I$ of $R$ and every submodule $N$ of $M$.  Then $\delta_{(+)}$ is an expansion function of ideals of $R(+)M$.
\begin{thm}\label{34}
Let $R$ be a commutative ring, $\delta$ be an expansion function of $Id(R)$, $m$ and $n$ be integers with $1\leq~n<~m$, $I$ be a prober ideal of $R$, and $M$ be an $R-$module.  Then the following statements are equivalent.\\
\begin{enumerate}
\item[(1)] $I(+)M$ is a weakly $(m,n)-$closed $\delta_{(+)}-$primary ideal of $R(+)M$ that is not $(m,n)-$closed $\delta_{(+)}-$primary.\\
\item[(2)] $I$ is a weakly $(m,n)-$closed $\delta-$primary ideal of $R$ that is not $(m,n)-$closed $\delta-$primary and $m(x^{m-1}M)=0$ for every $(m,n)-$unbrekable-zero element $x$ of $I$.
\end{enumerate}
\end{thm}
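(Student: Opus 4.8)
The plan is to reduce everything to the explicit power formula in the idealization. By an easy induction one has $(a,b)^k=(a^k,\,k\,a^{k-1}b)$ for every $k\ge 1$ and every $(a,b)\in R(+)M$. Combining this with the descriptions $I(+)M=\{(i,x):i\in I,\ x\in M\}$ and $\delta_{(+)}(I(+)M)=\delta(I)(+)M$ yields the three translations I will use throughout: $(a,b)^m\in I(+)M\iff a^m\in I$; $(a,b)^m=0\iff a^m=0$ and $m\,a^{m-1}b=0$; and $(a,b)^n\in\delta_{(+)}(I(+)M)\iff a^n\in\delta(I)$. In particular $(a,0)^k=(a^k,0)$, which lets me transport statements between $R$ and $R(+)M$ freely.

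For $(1)\Rightarrow(2)$: First I would verify $I$ is weakly $(m,n)$-closed $\delta$-primary. If $0\neq a^m\in I$, then $(a,0)^m=(a^m,0)\neq(0,0)$ lies in $I(+)M$, so the hypothesis gives $(a,0)^n=(a^n,0)\in\delta(I)(+)M$, i.e. $a^n\in\delta(I)$. Next, since $I(+)M$ is not $(m,n)$-closed $\delta_{(+)}$-primary there is $(a,b)$ with $(a,b)^m\in I(+)M$ but $(a,b)^n\notin\delta_{(+)}(I(+)M)$; the weakly hypothesis forces $(a,b)^m=0$, hence $a^m=0$ and $a^n\notin\delta(I)$, so $a$ is a $\delta$-$(m,n)$-unbreakable-zero element of $I$ and $I$ is not $(m,n)$-closed $\delta$-primary. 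Finally, let $x$ be any $\delta$-$(m,n)$-unbreakable-zero element of $I$ and suppose $m\,x^{m-1}b\neq 0$ for some $b\in M$; then $(x,b)^m=(0,\,m\,x^{m-1}b)\neq(0,0)$ lies in $I(+)M$ because $x^m=0\in I$, so the weakly hypothesis forces $x^n\in\delta(I)$, contradicting $x^n\notin\delta(I)$. Hence $m(x^{m-1}M)=0$.

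For $(2)\Rightarrow(1)$: Since $I$ is not $(m,n)$-closed $\delta$-primary, pick a $\delta$-$(m,n)$-unbreakable-zero element $x$ of $I$; then $(x,0)$ witnesses that $I(+)M$ is not $(m,n)$-closed $\delta_{(+)}$-primary, as $(x,0)^m=0\in I(+)M$ while $(x,0)^n=(x^n,0)\notin\delta(I)(+)M$. For the weakly property, take $0\neq(a,b)^m\in I(+)M$, so $a^m\in I$ and $(a^m,\,m\,a^{m-1}b)\neq(0,0)$. If $a^m\neq 0$, then $0\neq a^m\in I$ and the weakly $(m,n)$-closed $\delta$-primary hypothesis on $I$ gives $a^n\in\delta(I)$, i.e. $(a,b)^n\in\delta_{(+)}(I(+)M)$. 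If $a^m=0$, then necessarily $m\,a^{m-1}b\neq 0$; were $a^n\notin\delta(I)$, then $a$ would be a $\delta$-$(m,n)$-unbreakable-zero element of $I$, so by hypothesis $m(a^{m-1}M)=0$, whence $m\,a^{m-1}b=0$, a contradiction; therefore $a^n\in\delta(I)$ and again $(a,b)^n\in\delta_{(+)}(I(+)M)$.

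I do not anticipate a genuine obstacle: the whole argument is bookkeeping around the power formula. The one place that needs care is the case split in $(2)\Rightarrow(1)$ when $a^m=0$ but $(a,b)^m\neq 0$, which is precisely where the extra hypothesis $m(x^{m-1}M)=0$ is consumed; one must make sure $a$ legitimately qualifies as a $\delta$-$(m,n)$-unbreakable-zero element of $I$ (which presupposes $I$ already known to be weakly $(m,n)$-closed $\delta$-primary, as granted in (2)) before invoking that hypothesis.
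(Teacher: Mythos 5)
Your argument is correct and follows essentially the same route as the paper's: both directions rest on the power formula $(a,b)^k=(a^k,\,k\,a^{k-1}b)$ in $R(+)M$ together with $\delta_{(+)}(I(+)M)=\delta(I)(+)M$, with the hypothesis $m(x^{m-1}M)=0$ consumed exactly where you place it. Your write-up is in fact slightly more explicit than the paper's at the case split $a^m=0$ versus $a^m\neq 0$ in $(2)\Rightarrow(1)$, but there is no substantive difference.
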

\begin{proof}
$(1)\Longrightarrow(2)$ Assume that $0\neq~r^m\in~I$ for some $r\in~R$.  Then, $(0,0)\neq(r,0)^m=(r^m,0)\in~I(+)M$.  Thus, $(r,0)^n=(r^n,0)\in~I(+)M$; so $r^n\in\delta(I)$.  Thus, $I$ is a weakly $(m,n)-$closed $\delta-$primary ideal of $R$.  Since $I(+)M$ is not $(m,n)-$closed $\delta_{(+)}-$primary ideal of $R(+)M$, we have $I(+)M$ and hence $I$, has a $\delta-(m,n)-$unbrekable-zero element; so $I$ is not an $(m,n)-$closed $\delta-$primary ideal of $R$.  Let $x$ be a $\delta-(m,n)-$unbrekable-zero element of $I$ and $a\in~M$.  Then, $(x,a)^m=(x^m,m(x^{m-1}a))\in~I(+)M$.  Since $a^n\not\in\delta(I)$, we have $(x,a)^m=(x^m,m(x^{m-1}a))=(0,0)$.  Thus $m(x^{m-1}M)=0$.\\
$(2)\Longrightarrow(1)$ Since $I$ is a weakly $(m,n)-$closed $\delta-$primary ideal of $R$ that is not $(m,n)-$closed $\delta-$primary, we have $I$  has a $\delta-(m,n)-$unbrekable-zero element $x$.  Thus, $(x,0)$ is a $\delta_{(+)}-(m,n)-$unbrekable-zero element of $I(+)M$.  Thus, $I(+)M$ is not an $(m,n)-$closed $\delta_{(+)}-$primary ideal of $R(+)M$.  Assume that $(0,0)\neq(r,y)^m=(r^m,m(r^{m-1}y))\in~I(+)M$.  Then, $r$ is not a $\delta-(m,n)-$unbrekable-zero element of $I$ by hypothesis.  Hence, $(r^n,n(r^{n-1}y)=(r,y)^n\in\delta(I(+)M)$.  Therefore, $I(+)M$ is a weakly $(m,n)-$closed $\delta_{(+)}-$primary ideal of $R(+)M$ that is not $(m,n)-$closed $\delta_{(+)}-$primary.
\end{proof}
\section{weakly $(m,n)-$closed $\delta-$primary ideals in amalgamated algebra}
Let $f:A\rightarrow B$ be a ring homomorphism and $J$ be an ideal of $B.$ We
define the following subring of $A\times B:$
$$A\bowtie^{f}J=\left\{(a,f(a)+j)|~a\in A,j\in J\right\}$$

called \emph{the amalgamation of $A$ with $B$ along $J$ with
respect to $f.$} This construction is a generalization of the
amalgamated duplication of a ring along an ideal introduced and
studied in \cite{hh,hhh}. If $A$ is a commutative ring with
unity, and $I$ be a ideal of $A$, the amalgamated duplication of
$A$ along the ideal $I$, coincides with $A\bowtie^{id}I,$ where $id$ is the identity map $id:=id_A:A\rightarrow~A$, we have
$$A\bowtie I=\{(a,a+i)|~a\in A,i\in I\}.$$

The interest of amalgamation resides, partly, in its ability to
cover serval constructions in commutative rings, including
specially trivial extension(also called Nagat's
idealizations \cite{hhhh}). Moreover, other constructions
($A+XB[X],A+XB[[X]]$, and the $D+M$ construction) can be studied
as particular cases amalgamation (\cite{hhh}, Examples 2.5 and 2.6).\\

Let $\delta: Id(A)\rightarrow~Id(A)$ and $\delta_{1}:Id(f(A)+J)\rightarrow~Id(f(A)+J)$ are expansion functions of $Id(A)$ and $Id(f(A)+J)$ respectively.  We define a function $\delta_{\bowtie^{f}}:Id(A\bowtie^{f}J)\longrightarrow~Id(A\bowtie^{f}J)$ such that $\delta_{\bowtie^{f}}(I\bowtie^{f}J)=\delta(I)\bowtie^{f}J$ for every ideal $I$ of $A$ and $\delta_{\bowtie^{f}}(\overline{K}^{f})=\{(a,f(a)+j)|~a\in A,j\in J, f(a)+j\in\delta_{1}(K)\}$ for every ideal $K$ of $f(A)+J.$ Then, $\delta_{\bowtie^{f}}$ is an expansion function of ideals of $A\bowtie^{f}J.$
\begin{lem}\label{30}
Let $f:A\rightarrow~B$ be a homomorphism of rings and $J$ be an ideal of $B.$  Let $I$ be a proper ideal of $A.$  Then the following statements are equivalent.
\begin{enumerate}
\item[(1)] $I$ is an $(m,n)-$closed $\delta-$primary ideal of $A.$
\item[(2)] $I\bowtie^{f}J$ is an $(m,n)-$closed $\delta_{\bowtie^{f}}-$primary ideal of $A\bowtie^{f}J.$
\end{enumerate}
\end{lem}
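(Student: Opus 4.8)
The plan is to reduce everything to a single membership criterion in the amalgamation, namely that for $x\in A$ and $j\in J$ one has $(x,f(x)+j)\in I\bowtie^{f}J$ if and only if $x\in I$. This is immediate from the definition of $I\bowtie^{f}J$: if $x\in I$ then $(x,f(x)+j)$ already has the required form, and conversely if $(x,f(x)+j)=(a,f(a)+j')$ with $a\in I$ and $j'\in J$, then $x=a\in I$. I would also record two routine companions: $I\bowtie^{f}J$ is proper precisely when $I$ is proper (since $(1,f(1)+j)\in I\bowtie^{f}J$ forces $1\in I$), and, reading the definition of $\delta_{\bowtie^{f}}$ on ideals of the form $I\bowtie^{f}J$, we have $\delta_{\bowtie^{f}}(I\bowtie^{f}J)=\delta(I)\bowtie^{f}J$, so the same criterion applies with $\delta(I)$ in place of $I$.

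The key computational observation is that powers stay ``diagonal''. For any $(a,f(a)+j)\in A\bowtie^{f}J$ and any positive integer $k$, $(a,f(a)+j)^{k}=(a^{k},(f(a)+j)^{k})$, and expanding $(f(a)+j)^{k}$ by the binomial theorem shows $(f(a)+j)^{k}=f(a^{k})+j_{k}$ for some $j_{k}\in J$, since every mixed term contains a factor from $J$ and $J$ is an ideal. Hence $(a,f(a)+j)^{k}=(a^{k},f(a^{k})+j_{k})$ is again of diagonal form, and by the membership criterion it lies in $I\bowtie^{f}J$ if and only if $a^{k}\in I$ (and in $\delta(I)\bowtie^{f}J$ if and only if $a^{k}\in\delta(I)$).

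For $(1)\Rightarrow(2)$, assume $I$ is an $(m,n)$-closed $\delta$-primary ideal of $A$ and take $(a,f(a)+j)\in A\bowtie^{f}J$ with $(a,f(a)+j)^{m}\in I\bowtie^{f}J$. By the observation $a^{m}\in I$, so $a^{n}\in\delta(I)$, and applying the criterion to $\delta(I)$ gives $(a,f(a)+j)^{n}\in\delta(I)\bowtie^{f}J=\delta_{\bowtie^{f}}(I\bowtie^{f}J)$; since $I\bowtie^{f}J$ is proper this shows it is $(m,n)$-closed $\delta_{\bowtie^{f}}$-primary. For $(2)\Rightarrow(1)$, assume $I\bowtie^{f}J$ is $(m,n)$-closed $\delta_{\bowtie^{f}}$-primary and take $a\in A$ with $a^{m}\in I$. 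Then $(a,f(a))^{m}=(a^{m},f(a^{m}))\in I\bowtie^{f}J$, hence $(a,f(a))^{n}=(a^{n},f(a^{n}))\in\delta_{\bowtie^{f}}(I\bowtie^{f}J)=\delta(I)\bowtie^{f}J$, so $a^{n}\in\delta(I)$, and $I$ is proper, giving (1).

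Since each step is a direct unwinding of definitions, I do not anticipate a genuine obstacle. The only points that need a little care are the bookkeeping in the binomial expansion — making sure the second coordinate of a $k$-th power stays in $f(a^{k})+J$ — and using the correct branch of the definition of $\delta_{\bowtie^{f}}$ (the one for ideals $I\bowtie^{f}J$, not for the ideals $\overline{K}^{f}$), so that the identity $\delta_{\bowtie^{f}}(I\bowtie^{f}J)=\delta(I)\bowtie^{f}J$ is legitimately available.
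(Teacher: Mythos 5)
Your proposal is correct and follows essentially the same route as the paper's proof: reduce membership of $(a,f(a)+j)^{k}$ in $I\bowtie^{f}J$ (resp.\ $\delta(I)\bowtie^{f}J$) to $a^{k}\in I$ (resp.\ $a^{k}\in\delta(I)$) and run the two implications directly, using the element $(a,f(a))$ for the converse. The only difference is that you make explicit the bookkeeping (the diagonal form of powers, the membership criterion, and properness) that the paper leaves implicit.
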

\begin{proof}
$(1)\Rightarrow(2).$  Suppose that $I$ is $(m,n)-$closed $\delta-$primary ideal of $A$ and let $(a,f(a)+j)^{m}\in I\bowtie^{f}J$ for some $(a,f(a)+j)\in~A\bowtie^{f}J.$  Then, $a^{m}\in I.$  The fact that $I$ is $(m,n)-$closed $\delta-$primary ideal of $A,$ gives $a^{n}\in \delta(I).$  Which implies $(a,f(a)+j)^{n}\in\delta(I)\bowtie^{f}J=\delta_{\bowtie^{f}}(I\bowtie^{f}J).$  Hence, $I\bowtie^{f}J$ is $(m,n)-$closed $\delta_{\bowtie^{f}}-$primary ideal of $A\bowtie^{f}J.$\\
$(2)\Rightarrow (1).$  Suppose that $I\bowtie^{f}J$ is $(m,n)-$closed $\delta_{\bowtie^{f}}J-$primary ideal of $A\bowtie^{f}J.$ Let $a^{m}\in I$ for some $a\in~A.$  Then, $(a,f(a))^{m}\in I\bowtie^{f}J$ which implies $(a,f(a))^{n}\in\delta_{\bowtie^{f}}(I\bowtie^{f}J)=\delta(I)\bowtie ^{f}J.$  Then, $a^{n}\in\delta(I).$  Hence, $I$ is $(m,n)-$closed $\delta-$primary ideal of $A.$ As desired.
\end{proof}

\begin{thm}\label{31}
Let $f:A\rightarrow B$ be a homomorphism of rings and $J$ be an ideal of $B.$  Then the following statements are equivalent.
\begin{enumerate}
\item[(1)] $I\bowtie^{f}J$ is a weakly $(m,n)-$closed $\delta_{\bowtie^{f}}-$primary ideal of $A\bowtie^{f}J$ that is not $(m,n)-$closed $\delta_{\bowtie^{f}}-$primary.
\item[(2)] $I$ is a weakly $(m,n)-$closed $\delta-$primary ideal of $A$ that is not $(m,n)-$closed $\delta-$primary and for every $\delta$-$(m,n)$-unbreakable-zero element $a$ of $I$ we have $(f(a)+j)^{m}=0$ for every $j\in~J$.
\end{enumerate}
\end{thm}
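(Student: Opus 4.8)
The plan is to mirror the argument used for the idealization in Theorem~\ref{34}, transporting everything through the structural identities of the amalgamation. The key dictionary to record at the outset is this: for $(a,f(a)+j)\in A\bowtie^f J$ one has $(a,f(a)+j)^m=(a^m,(f(a)+j)^m)$, and since $I\bowtie^f J=\{(i,f(i)+j)\mid i\in I,\ j\in J\}$, this power lies in $I\bowtie^f J$ exactly when $a^m\in I$; moreover, because $\delta_{\bowtie^f}(I\bowtie^f J)=\delta(I)\bowtie^f J$, the $n$-th power $(a,f(a)+j)^n$ lies in $\delta_{\bowtie^f}(I\bowtie^f J)$ exactly when $a^n\in\delta(I)$. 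Finally, $(a,f(a)+j)^m=(0,0)$ precisely when $a^m=0$ \emph{and} $(f(a)+j)^m=0$. Once these are in place the whole proof reduces to elementary case analysis, and the ``not $(m,n)$-closed'' halves of both implications are dispatched at once by Lemma~\ref{30}, which states that $I$ is $(m,n)$-closed $\delta$-primary if and only if $I\bowtie^f J$ is $(m,n)$-closed $\delta_{\bowtie^f}$-primary.

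For $(1)\Rightarrow(2)$ I would first show $I$ is weakly $(m,n)$-closed $\delta$-primary: given $0\neq a^m\in I$, the element $(a,f(a))^m=(a^m,f(a)^m)$ is a nonzero element of $I\bowtie^f J$, so by (1) its $n$-th power lies in $\delta_{\bowtie^f}(I\bowtie^f J)=\delta(I)\bowtie^f J$, forcing $a^n\in\delta(I)$. That $I$ is not $(m,n)$-closed $\delta$-primary follows from Lemma~\ref{30}, so $I$ has a $\delta$-$(m,n)$-unbreakable-zero element. For any such $a$ and any $j\in J$, consider $(a,f(a)+j)^m=(0,(f(a)+j)^m)\in I\bowtie^f J$: if $(f(a)+j)^m\neq0$ this power is nonzero, so (1) gives $(a,f(a)+j)^n\in\delta(I)\bowtie^f J$ and hence $a^n\in\delta(I)$, contradicting that $a$ is unbreakable-zero; therefore $(f(a)+j)^m=0$.

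For $(2)\Rightarrow(1)$, Lemma~\ref{30} turns the hypothesis that $I$ is not $(m,n)$-closed $\delta$-primary into the statement that $I\bowtie^f J$ is not $(m,n)$-closed $\delta_{\bowtie^f}$-primary, so only weak $(m,n)$-closedness remains. Suppose $(0,0)\neq(a,f(a)+j)^m\in I\bowtie^f J$, so $a^m\in I$. If $a^m\neq0$, then weak $(m,n)$-closedness of $I$ gives $a^n\in\delta(I)$, whence $(a,f(a)+j)^n\in\delta(I)\bowtie^f J=\delta_{\bowtie^f}(I\bowtie^f J)$. If $a^m=0$, then necessarily $(f(a)+j)^m\neq0$; were $a^n\notin\delta(I)$, then $a$ would be a $\delta$-$(m,n)$-unbreakable-zero element of $I$, and the hypothesis in (2) would force $(f(a)+j)^m=0$, a contradiction, so $a^n\in\delta(I)$ and again $(a,f(a)+j)^n\in\delta_{\bowtie^f}(I\bowtie^f J)$.

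I do not expect a genuine obstacle here: the only care required is bookkeeping with the amalgamation — checking the membership/expansion dictionary above and splitting into the cases $a^m=0$ and $a^m\neq0$ — while all of the ``not $(m,n)$-closed'' bookkeeping is absorbed into Lemma~\ref{30}. The one mildly delicate point is the case $a^m=0$ in $(2)\Rightarrow(1)$, where the hypothesis on unbreakable-zero elements must be applied in contrapositive form (``if $(f(a)+j)^m\neq0$ then $a$ is not unbreakable-zero, i.e.\ $a^n\in\delta(I)$'') rather than used directly.
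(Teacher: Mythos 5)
Your proposal is correct and follows essentially the same route as the paper: both directions reduce to the identity $(a,f(a)+j)^m=(a^m,(f(a)+j)^m)$, the equality $\delta_{\bowtie^{f}}(I\bowtie^{f}J)=\delta(I)\bowtie^{f}J$, and a case split on whether $a^m=0$, with the unbreakable-zero hypothesis used in contrapositive exactly as in the paper. If anything you are slightly more careful than the printed proof, which in the direction $(2)\Rightarrow(1)$ never explicitly notes that $I\bowtie^{f}J$ fails to be $(m,n)$-closed $\delta_{\bowtie^{f}}$-primary; your appeal to Lemma~\ref{30} supplies that missing remark.
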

\begin{proof}
$(1)\Rightarrow(2).$ Assume that $I\bowtie^{f}J$ is weakly $(m,n)-$closed $\delta-$primary ideal of $A\bowtie^{f}J.$  Let $0\neq a^{m}\in I.$ Then, $0\neq(a,f(a))^{m}\in I\bowtie^{f}J$ which implies $(a,f(a))^{n}\in \delta_{\bowtie^{f}}(I\bowtie^{f}J)=\delta(I)\bowtie^{f}J.$  Thus, $a^{n}\in\delta(I)$ and consequently $I$ is weakly $(m,n)-$closed $\delta-$primary ideal of $A.$  On the other hand by the Theorem, we have $I$ is not $(m,n)-$closed $\delta-$primary ideal of $A.$  So there exists an $\delta-(m,n)$-unbreakable-zero element $a$ of $I.$  We will show that $(f(a)+j)^{m}=0$ for every $j$ in $J.$ By the way of contradiction, suppose that $0\neq (f(a)+j)^{m}$ for some $j\in J.$  Then $0\neq (a,f(a)+j)^{m}\in I\bowtie^{f}J.$ As $I\bowtie^{f}J$ is weakly $(m,n)-$closed $\delta_{\bowtie^{f}}-$primary, we get $(a,f(a)+j)^{n}\in \delta_{\bowtie^{f}}(I\bowtie^{f}J),$ which implies $a^{n}\in \delta(I).$  As desired contradiction.\\
$(2)\Rightarrow(1).$ Let $0\neq (a,f(a)+j)^{m}\in I\bowtie^{f}J$ for some $a\in A$ and $j\in J.$  Then, $a^{m}\in I.$  If $0\neq a^{m},$ as $I$ is weakly $(m,n)-$closed $\delta-$primary ideal, we get $a^{n}\in \delta(I).$  Hence, $(a,f(a)+j)^{n}\in\delta_{\bowtie^{f}}(I\bowtie^{f}J).$  Now assume that $a^{m}=0,$ necessarily $a^{n}\in \delta(I),$ if not. We have  $a$ is a $\delta-(m,n)$-unbreakable-zero element of $I.$  So by assumption we have $(f(a)+j)^{m}=0.$  Then, $(a,f(a)+j)^{m}=0,$ a contradiction.  Hence, $I\bowtie^{f}J$ is weakly $(m,n)-$closed $\delta_{\bowtie^{f}}-$primary ideal of $A\bowtie^{f}J.$
\end{proof}
\begin{cor}
Let $f:A\rightarrow~B$ be a homomorphism of rings and $J$ be an ideal of $B.$  If $char(f(A)+J)=m$ and $J^{m}=0.$  Then the following statements are equivalent.
\begin{enumerate}
\item[(1)] $I\bowtie^{f}J$ is a weakly $(m,n)-$closed $\delta-$primary ideal of $A\bowtie^{f}J$ that is not $(m,n)-$closed $\delta-$primary.
\item[(2)] $I$ is a weakly $(m,n)-$closed $\delta$-primary ideal of $A$ that is not $(m,n)-$closed $\delta$-primary.
\end{enumerate}
\end{cor}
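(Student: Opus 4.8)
The plan is to read this off directly from Theorem~\ref{31}. That theorem already establishes the equivalence of statement (1) with the conjunction of two conditions: that $I$ is a weakly $(m,n)$-closed $\delta$-primary ideal of $A$ which is not $(m,n)$-closed $\delta$-primary, \emph{and} that $(f(a)+j)^{m}=0$ for every $\delta$-$(m,n)$-unbreakable-zero element $a$ of $I$ and every $j\in J$. The point of the corollary is that, under the extra hypotheses $char(f(A)+J)=m$ and $J^{m}=0$, the second condition holds automatically, so it may be suppressed. Consequently $(1)\Rightarrow(2)$ is immediate: (1) yields, via Theorem~\ref{31}, that $I$ is weakly $(m,n)$-closed $\delta$-primary but not $(m,n)$-closed $\delta$-primary, which is exactly (2). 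The only implication that needs an argument is $(2)\Rightarrow(1)$: starting from (2), I must verify the unbreakable-zero condition and then apply Theorem~\ref{31}.

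For that substantive step, let $a$ be a $\delta$-$(m,n)$-unbreakable-zero element of $I$, so $a^{m}=0$, and fix $j\in J$. Working inside the commutative ring $f(A)+J$ (the ambient ring for the second coordinates of elements of $A\bowtie^{f}J$), expand by the Binomial Theorem:
\[
(f(a)+j)^{m}=\sum_{k=0}^{m}\binom{m}{k}f(a)^{k}j^{m-k}.
\]
The term $k=m$ equals $f(a)^{m}=f(a^{m})=f(0)=0$; the term $k=0$ equals $j^{m}\in J^{m}=(0)$; and for $0<k<m$ each coefficient $\binom{m}{k}$ annihilates $f(A)+J$, since $char(f(A)+J)=m$ gives $\binom{m}{k}r=0$ for every $r\in f(A)+J$. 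Hence $(f(a)+j)^{m}=0$ for all $j\in J$, which is precisely the hypothesis of Theorem~\ref{31}(2); that theorem then delivers (1).

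The one point requiring care — and the only real obstacle — is the vanishing of the cross terms $\binom{m}{k}f(a)^{k}j^{m-k}$ for $0<k<m$: this uses the divisibility $m\mid\binom{m}{k}$ in that range, i.e.\ that $m$ is prime (the same hypothesis made in the earlier theorem where $char(R)=m$ is assumed prime), so primality of $m$ should be read into the statement. Beyond that, the argument is bookkeeping: one only needs that $\delta_{\bowtie^{f}}(I\bowtie^{f}J)=\delta(I)\bowtie^{f}J$ and that the characteristic and nilpotency conditions are imposed in $f(A)+J$, which is exactly where $f(a)+j$ lives. Once the unbreakable-zero condition is in hand, Theorem~\ref{31} closes both directions and the corollary follows.
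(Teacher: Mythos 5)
Your proof is correct and follows essentially the same route as the paper's: invoke Theorem~\ref{31} and show the unbreakable-zero condition $(f(a)+j)^m=0$ holds automatically by expanding with the Binomial Theorem, killing $f(a)^m=f(a^m)=0$, $j^m\in J^m=(0)$, and the cross terms via $char(f(A)+J)=m$. Your added caveat is well taken and in fact exposes a gap in the paper's own argument: the divisibility $m\mid\binom{m}{k}$ for all $0<k<m$ holds only when $m$ is prime (e.g.\ $4\nmid\binom{4}{2}=6$), so primality of $m$ must be added to the hypotheses, which the paper's proof silently assumes.
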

\begin{proof}
For every $j\in J,$ we have $$(f(a)+j)^{m}=f(a^{m})=\displaystyle\sum_{k=1}^{m-1}\left(
\begin{array}{cccc}
m\\
k\\
\end{array}
\right)f(a^{m-k}j^{k}+j^{m}.$$
As $J^{m}=0$ we gate $j^{m}=0$ for every $j\in J$ on the other hand we have $\left(
\begin{array}{cccc}
m\\
k\\
\end{array}
\right)=0$ since $ch(f(A)+j)=m$ and $m$ divides $\left(
\begin{array}{cccc}
m\\
k\\
\end{array}
\right).$
\end{proof}
\begin{cor}
Let $A$ be a commutative ring, and $I$ be a proper ideal of $A.$ Let $K$ be a proper ideal of $A.$ Then the following statements are equivalent.
\begin{enumerate}
\item[(1)] $K\bowtie~I$ is a weakly $(m,n)-$closed $\delta_{\bowtie}-$primary ideal of $A\bowtie~I$ which is not $(m,n)-$closed $\delta_{\bowtie}-$primary.
\item[(2)] $K$ is weakly $(m,n)-$closed $\delta-$primary ideal of $A$ which is not $(m,n)-$closed $\delta-$primary and $(a+i)^{m}=0$ for every an $(m,n)$-unbreakable element $a$ of $K$ and every element $i\in~I$.
\end{enumerate}
\end{cor}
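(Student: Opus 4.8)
The plan is to obtain this as the special case of Theorem~\ref{31} in which the amalgamation is an amalgamated duplication. Concretely, I would take $B=A$, let $f=\mathrm{id}_{A}:A\to A$ be the identity homomorphism, and set $J=I$. With these choices $A\bowtie^{\mathrm{id}_{A}}I$ is exactly $A\bowtie I=\{(a,a+i)\mid a\in A,\ i\in I\}$, the ideal $K\bowtie^{\mathrm{id}_{A}}I$ is $K\bowtie I$, and the expansion function $\delta_{\bowtie^{\mathrm{id}_{A}}}$ coincides with the function $\delta_{\bowtie}$ appearing in the statement (the second clause in the definition of $\delta_{\bowtie^{f}}$, concerning ideals $\overline{K}^{f}$ of $f(A)+J$, collapses to the first in this situation). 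These identifications were all recorded in the paragraph preceding Lemma~\ref{30}.

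Next I would simply transcribe Theorem~\ref{31} under this specialization, with $K$ playing the role of the ideal called $I$ in that theorem. The theorem then reads: $K\bowtie I$ is a weakly $(m,n)$-closed $\delta_{\bowtie}$-primary ideal of $A\bowtie I$ that is not $(m,n)$-closed $\delta_{\bowtie}$-primary if and only if $K$ is a weakly $(m,n)$-closed $\delta$-primary ideal of $A$ that is not $(m,n)$-closed $\delta$-primary and, for every $\delta$-$(m,n)$-unbreakable-zero element $a$ of $K$, one has $(f(a)+j)^{m}=0$ for all $j\in J$. Since $f=\mathrm{id}_{A}$ forces $f(a)=a$, and since $J=I$, the last condition becomes $(a+i)^{m}=0$ for every $i\in I$, which is precisely condition (2) of the corollary (the phrase ``$(m,n)$-unbreakable element $a$ of $K$'' being the $\delta$-$(m,n)$-unbreakable-zero element of $K$ in the sense of the definition preceding Theorem~\ref{81}). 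This yields both implications at once.

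I do not expect any real obstacle here: the entire content is the observation that amalgamation along $I$ with respect to the identity map is the amalgamated duplication $A\bowtie I$ and that the induced expansion functions agree, after which Theorem~\ref{31} applies verbatim. The only point worth stating explicitly in the write-up is this matching of notations, so that the substituted condition $(f(a)+j)^{m}=0$ for all $j\in J$ is visibly the same as $(a+i)^{m}=0$ for all $i\in I$.
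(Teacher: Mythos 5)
Your proposal is correct and is exactly the paper's own argument: the paper likewise obtains this corollary by taking $B=A$ and $f=\mathrm{id}_{A}$ (so $J=I$) in Theorem~\ref{31}. The extra care you take in matching $\delta_{\bowtie^{\mathrm{id}_A}}$ with $\delta_{\bowtie}$ and translating $(f(a)+j)^m=0$ into $(a+i)^m=0$ is a reasonable elaboration of the same one-line specialization.
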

\begin{proof}
Take $A=B$ and $f=id_{A}$ in Theorem \ref{31}, where $id_{A}$ is the identity map $id_{A}:A\rightarrow~A$.
\end{proof}
Now, we give another proof to Theorem \ref{34} by using amalgamated algebra.  Recall Theorem \ref{34}:
\begin{cor}\label{32}
Let $A$ be a commutative ring, $M$ be an $A$-module.  Let $I$ be a proper ideal of $A.$  Then the following statements are equivalent.
\begin{enumerate}
\item[(1)] $I(+)M$ is a weakly $(m,n)-$closed $\delta_{(+)}-$primary ideal of $A(+)M$ that is not $(m,n)-$closed $\delta_{(+)}-$primary.
\item[(2)] $I$ is a weakly $(m,n)-$closed $\delta-$primary ideal of $A$ that is not $(m,n)-$closed $\delta-$primary and $m(a^{m-1})M=0$ for every $(m,n)$-unbreakable element $a$ of $I.$
\end{enumerate}
\end{cor}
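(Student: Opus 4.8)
The plan is to realize the idealization $A(+)M$ as an amalgamated algebra and then quote Theorem \ref{31}. Concretely, take $B=A(+)M$, let $f\colon A\to B$ be the canonical embedding $f(a)=(a,0)$, and set $J=0(+)M=\{(0,y)\mid y\in M\}$, which is an ideal of $B$ since $(0,y)(a',y')=(0,a'y)$. Then
$$A\bowtie^{f}J=\{(a,(a,0)+(0,y))\mid a\in A,\ y\in M\}=\{(a,(a,y))\mid a\in A,\ y\in M\},$$
and the second projection $\pi\colon A\bowtie^{f}J\to A(+)M$, $(a,(a,y))\mapsto(a,y)$, is a ring isomorphism: it is clearly bijective, and a direct check of products shows it preserves multiplication. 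Moreover $\pi(I\bowtie^{f}J)=I(+)M$, and $\pi$ conjugates $\delta_{\bowtie^{f}}$ to $\delta_{(+)}$, because $\delta_{\bowtie^{f}}(I\bowtie^{f}J)=\delta(I)\bowtie^{f}J$ is carried by $\pi$ to $\delta(I)(+)M=\delta_{(+)}(I(+)M)$.

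Using this, I would transfer the relevant properties along $\pi$ (for instance via Theorem \ref{3} applied to the isomorphism $\pi$, which is simultaneously injective and surjective and respects the expansion functions, or simply directly from the definitions): $I\bowtie^{f}J$ is a weakly $(m,n)$-closed $\delta_{\bowtie^{f}}$-primary ideal that is not $(m,n)$-closed $\delta_{\bowtie^{f}}$-primary if and only if $I(+)M$ is a weakly $(m,n)$-closed $\delta_{(+)}$-primary ideal that is not $(m,n)$-closed $\delta_{(+)}$-primary. The condition on $I$ in Theorem \ref{31}(2) is intrinsic to $A$ and unchanged. Thus the equivalence (1)$\Leftrightarrow$(2) of the corollary reduces to Theorem \ref{31}, once the unbreakable-zero condition is rewritten.

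It remains to translate ``$(f(a)+j)^{m}=0$ for every $j\in J$''. Let $a$ be a $\delta$-$(m,n)$-unbreakable-zero element of $I$, so $a^{m}=0$. A short induction in $A(+)M$ gives $(a,y)^{m}=(a^{m},m\,a^{m-1}y)=(0,m\,a^{m-1}y)$ for every $y\in M$. Under $\pi$, the element $f(a)+j$ with $j=(0,y)\in J$ corresponds to $(a,y)$, so $(f(a)+j)^{m}=0$ for all $j\in J$ is equivalent to $(a,y)^{m}=0$ for all $y\in M$, that is, to $m(a^{m-1}M)=0$. Substituting this into Theorem \ref{31}(2) yields precisely condition (2) of the corollary.

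The only mildly delicate points are the verification that $\pi$ is a ring homomorphism respecting the expansion functions (needed so that the weakly/ordinary $(m,n)$-closed $\delta$-primary notions genuinely transfer) and the bookkeeping identity $(a,y)^{m}=(a^{m},m\,a^{m-1}y)$; both are routine, and I anticipate no real obstacle, since this is a re-derivation of Theorem \ref{34} through the amalgamation machinery already set up above.
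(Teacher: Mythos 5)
Your proposal is correct and follows essentially the same route as the paper's proof: realize $A(+)M$ as $A\bowtie^{f}J$ with $f(a)=(a,0)$ and $J=0(+)M$, invoke Theorem \ref{31}, and translate $(f(a)+j)^{m}=0$ for all $j\in J$ into $m(a^{m-1}M)=0$ via $(a,y)^{m}=(a^{m},m\,a^{m-1}y)$. You merely make explicit the isomorphism $\pi$ and the compatibility of the expansion functions, details the paper leaves as ``not difficult to check.''
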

\begin{proof}
Let $f:A\rightarrow B$ be the canonical homomorphism defined by, for every $a\in A,~f(a)=(a,0)$ and $J:=0\propto~M.$  It not difficult to check that $A\propto~E$ is naturally isomorphic to $A\bowtie^{f}J,$ and the ideal $I\bowtie^{f}J$ is canonically isomorphic to $I\propto~M.$  By Theorem \ref{31}, we have $I(+)M$ is weakly $(m,n)-$closed $\delta_{(+)}-$primary ideal of $A(+)M$ which is not $(m,n)-$closed $\delta_{(+)}-$primary if and only if $I$ is weakly $(m,n)-$closed $\delta-$primary ideal of $A$ which is not $(m,n)-$closed $\delta-$primary and for ever $(m,n)$-unbreakable element $a$ of $I$, we have $(f(a)+j)^{m}=0$ for every $j\in J=(0)(+) M.$  Now, if $x\in M,$ then $((a,0)+(0,x))^{m}=(a^{m},ma^{m-1}x)=0,$ thus $m(a^{m-1}x)=0.$  Hence, $m(a^{m-1}M)=0.$
\end{proof}
\begin{rem}
If $I$ is a weakly $(m,n)-$closed $\delta-$primary ideal of $A,$ then $I\bowtie^{f}B$ need not to be a weakly $(m,n)-$closed $\delta_{\bowtie^{f}}-$primary ideal of $A\bowtie^{f}J.$  Let $A=Z_{8}.$  Then, $I=\{0\}$ is clearly weakly $(3,1)-$closed $\delta_{\sqrt{I}}-$primary ideal that not $(3,1)-$closed $\delta_{\sqrt{I}}-$primary, since $2^{3}\in\{0\}$ but $2^{2}\notin\{0\}.$  Let $M$ be an $A$-module and set $J:=0(+)M.$  Let $f:A\hookrightarrow~A(+)M$ the canonical homomorphism defined by $f(a)=(a,0)$ for every $a\in A.$ It is clear to see that $0\bowtie^{f}J$ is isomorphic to $0(+)M.$  Since $3(2^{2})(0(+)M)\neq 0$ by the Corollary \ref{32}, we conclude that $0\bowtie^{f}J$ is not a weakly $(3,1)-$closed $\delta_{\bowtie^{f}}-$primary ideal of $A\bowtie^{f}J.$
\end{rem}

\begin{lem}\label{33}
$\overline{K}^{f}$ is $(m,n)-$closed $\delta_{\bowtie^{f}}-$primary ideal of $A\bowtie^{f}J$ if and only if $K$ is $(m,n)-$closed $\delta_{1}-$primary ideal of $f(A)+J.$
\end{lem}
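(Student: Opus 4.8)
The plan is to run the whole argument through the canonical surjection $\pi\colon A\bowtie^{f}J\rightarrow f(A)+J$ defined by $\pi(a,f(a)+j)=f(a)+j$, i.e.\ the restriction to $A\bowtie^{f}J$ of the second projection of $A\times B$. Two observations make this work. First, for an ideal $K$ of $f(A)+J$ one has $\overline{K}^{f}=\pi^{-1}(K)$ straight from the definition of $\overline{K}^{f}$. Second, by the definition of $\delta_{\bowtie^{f}}$ on ideals of this second type, $\delta_{\bowtie^{f}}(\overline{K}^{f})=\overline{\delta_{1}(K)}^{f}=\pi^{-1}(\delta_{1}(K))$. So $\pi$ is a surjective ring homomorphism intertwining the two expansion functions, and the equivalence will be a formal consequence of the identity $\pi(x)^{k}=\pi(x^{k})$. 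This is the exact analogue of the preceding lemma, where ideals of the first type $I\bowtie^{f}J$ were handled via the first projection $A\bowtie^{f}J\rightarrow A$.

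First I would record the elementary facts: $\pi$ is a well-defined onto ring homomorphism, $\overline{K}^{f}=\pi^{-1}(K)$ is an ideal of $A\bowtie^{f}J$, and --- since $\pi$ is surjective --- $\overline{K}^{f}$ is a proper ideal if and only if $K$ is a proper ideal; this disposes of the ``properness'' clause. For the implication $(\Leftarrow)$, assuming $K$ is $(m,n)$-closed $\delta_{1}$-primary, I would take $x\in A\bowtie^{f}J$ with $x^{m}\in\overline{K}^{f}$, apply $\pi$ to get $\pi(x)^{m}=\pi(x^{m})\in K$, invoke the hypothesis to obtain $\pi(x)^{n}\in\delta_{1}(K)$, and conclude $x^{n}\in\pi^{-1}(\delta_{1}(K))=\delta_{\bowtie^{f}}(\overline{K}^{f})$. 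For $(\Rightarrow)$, assuming $\overline{K}^{f}$ is $(m,n)$-closed $\delta_{\bowtie^{f}}$-primary, I would take $y\in f(A)+J$ with $y^{m}\in K$, lift it to some $x\in A\bowtie^{f}J$ with $\pi(x)=y$ (possible by surjectivity), note that $\pi(x^{m})=y^{m}\in K$ forces $x^{m}\in\overline{K}^{f}$, hence $x^{n}\in\delta_{\bowtie^{f}}(\overline{K}^{f})=\pi^{-1}(\delta_{1}(K))$, and apply $\pi$ once more to land at $y^{n}=\pi(x^{n})\in\delta_{1}(K)$.

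I do not expect a real obstacle. The only step that warrants care is confirming $\delta_{\bowtie^{f}}(\overline{K}^{f})=\pi^{-1}(\delta_{1}(K))$, which is immediate once one unwinds the definition of $\delta_{\bowtie^{f}}$ on ideals of the form $\overline{K}^{f}$ given just before the preceding lemma; everything else is the bookkeeping identity $\pi(x^{k})=\pi(x)^{k}$ together with surjectivity. Alternatively one could remark that $\pi$ is a surjective $\delta_{\bowtie^{f}}\delta_{1}$-homomorphism and appeal to a transfer result in the spirit of Theorem \ref{3}, but since that theorem is phrased for the weakly $(m,n)$-closed notion, the short direct computation above is the cleaner route.
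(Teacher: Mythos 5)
Your proof is correct and is essentially the paper's own argument: the paper works element-wise with $(a,f(a)+j)$ and its powers, which is exactly your projection $\pi$ unwound, and both directions reduce to the same identities $\overline{K}^{f}=\pi^{-1}(K)$ and $\delta_{\bowtie^{f}}(\overline{K}^{f})=\pi^{-1}(\delta_{1}(K))$. Your explicit treatment of properness is a small tidiness bonus, but the route is the same.
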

\begin{proof}
Suppose that $\overline{K}^{f}$ is an $(m,n)-$closed $\delta_{\bowtie^{f}}-$primary ideal of $A\bowtie^{f}J.$  We claim that $K$ is an $(m,n)-$closed $\delta_{1}-$primary ideal of $f(A)+J.$  Indeed, let $(f(a)+j)^{m}\in K$ with $f(a)+j\in f(A)+J.$  Thus, $(a,f(a)+j)^{m}\in\overline{K}^{f}.$ Since $\overline{K}^{f}$ is an $(m,n)-$closed $\delta_{\bowtie^{f}}-$primary ideal, $(a,f(a)+j)^{n}\in\delta_{\bowtie^{f}}(\overline{K}^{f})=\{(a,f(a)+j)|~a\in~A,j\in J,~f(a)+j\in\delta_{1}(K)\}.$  Therefore, $(f(a)+j)^{n}\in\delta_{1}(K).$  Hence, $K$ is an $(m,n)-$closed $\delta_{1}-$primary ideal of $f(A)+J.$  Conversely, assume that $K$ is an $(m,n)-$closed $\delta_{1}-$primary ideal of $f(A)+J.$ Let $(a,f(a)+j)^{m}\in\overline{K}^{f}$ with $(a,f(a)+j)\in A\bowtie^{f}J.$  Obviously, $f(a)+j\in f(A)+j$ and $(f(a)+j)^{m}\in K$ which is an $(m,n)-$closed $\delta_{1}-$primary ideal.  So $(f(a)+j)^{n}\in\delta_{1}(K).$  Which implies $(a,f(a)+j)^{n}\in\delta_{\bowtie^{f}}(\overline{K}^{f}).$  Hence, $\overline{K}^{f}$ is an $(m,n)-$closed $\delta_{\bowtie^{f}}-$primary ideal of $A\bowtie^{f}J,$ as desired.
\end{proof}
\begin{thm}
The following statements are equivalent.
\begin{enumerate}
\item[(1)] $\overline{K}^{f}$ is a weakly $(m,n)-$closed $\delta_{\bowtie^{f}}-$primary ideal of $A\bowtie^{f}J$ which is not $(m,n)-$closed $\delta_{\bowtie^{f}}-$primary.
\item[(2)] $K$ is a weakly $(m,n)-$closed $\delta_{1}-$primary ideal of $f(A)+J$ which is not $(m,n)-$closed $\delta_{1}-$primary and for every $(m,n)$-unbreakable-zero $f(a)+j$ of $K$ we have $a^{m}=0.$
\end{enumerate}
\end{thm}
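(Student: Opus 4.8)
The plan is to mirror the proof of Theorem \ref{31}, with Lemma \ref{33} playing the role that Lemma \ref{30} played there. The bookkeeping fact underlying everything is that for $(a,f(a)+j)\in A\bowtie^{f}J$ and any positive integer $k$ one has $(a,f(a)+j)^{k}=(a^{k},(f(a)+j)^{k})$, that $(f(a)+j)^{k}=f(a^{k})+j'$ for some $j'\in J$, and therefore that $(a,f(a)+j)^{k}\in\overline{K}^{f}$ if and only if $(f(a)+j)^{k}\in K$, while $(a,f(a)+j)^{k}\in\delta_{\bowtie^{f}}(\overline{K}^{f})$ if and only if $(f(a)+j)^{k}\in\delta_{1}(K)$ (directly from the definition of $\delta_{\bowtie^{f}}$ on ideals of the form $\overline{K}^{f}$).

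For $(1)\Rightarrow(2)$, I would first check that $K$ is weakly $(m,n)$-closed $\delta_{1}$-primary: given $0\neq(f(a)+j)^{m}\in K$, the element $(a,f(a)+j)^{m}=(a^{m},(f(a)+j)^{m})$ is a nonzero element of $\overline{K}^{f}$, so by hypothesis $(a,f(a)+j)^{n}\in\delta_{\bowtie^{f}}(\overline{K}^{f})$, which translates to $(f(a)+j)^{n}\in\delta_{1}(K)$. Since $\overline{K}^{f}$ is not $(m,n)$-closed $\delta_{\bowtie^{f}}$-primary, Lemma \ref{33} forces $K$ not to be $(m,n)$-closed $\delta_{1}$-primary, so $K$ has an $(m,n)$-unbreakable-zero element $f(a)+j$. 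To see $a^{m}=0$, argue by contradiction: if $a^{m}\neq 0$, then (using $0\in K$) the element $(a,f(a)+j)^{m}=(a^{m},0)$ is a nonzero element of $\overline{K}^{f}$, so the weakly closed hypothesis yields $(a,f(a)+j)^{n}\in\delta_{\bowtie^{f}}(\overline{K}^{f})$, i.e. $(f(a)+j)^{n}\in\delta_{1}(K)$, contradicting that $f(a)+j$ is unbreakable-zero.

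For $(2)\Rightarrow(1)$, I would first exhibit an $(m,n)$-unbreakable-zero element of $\overline{K}^{f}$: taking an $(m,n)$-unbreakable-zero element $f(a)+j$ of $K$, the hypothesis gives $a^{m}=0$, so $(a,f(a)+j)^{m}=(0,0)$ while $(a,f(a)+j)^{n}\notin\delta_{\bowtie^{f}}(\overline{K}^{f})$ since $(f(a)+j)^{n}\notin\delta_{1}(K)$; hence $\overline{K}^{f}$ is not $(m,n)$-closed $\delta_{\bowtie^{f}}$-primary. For the weakly closed property, take $0\neq(a,f(a)+j)^{m}\in\overline{K}^{f}$, so that $(f(a)+j)^{m}\in K$. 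If $(f(a)+j)^{m}\neq 0$, weak closedness of $K$ gives $(f(a)+j)^{n}\in\delta_{1}(K)$, hence $(a,f(a)+j)^{n}\in\delta_{\bowtie^{f}}(\overline{K}^{f})$. If $(f(a)+j)^{m}=0$, then $(a,f(a)+j)^{m}=(a^{m},0)\neq(0,0)$ forces $a^{m}\neq 0$; since $(f(a)+j)^{m}=0\in K$, the contrapositive of the hypothesis says $f(a)+j$ cannot be an $(m,n)$-unbreakable-zero element of $K$, so necessarily $(f(a)+j)^{n}\in\delta_{1}(K)$, and again $(a,f(a)+j)^{n}\in\delta_{\bowtie^{f}}(\overline{K}^{f})$.

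The only genuinely delicate point is the last case of $(2)\Rightarrow(1)$: one has to notice that $a^{m}\neq 0$ together with $(f(a)+j)^{m}=0$ is exactly what triggers the contrapositive of the unbreakable-zero hypothesis, and that the inclusion $0\in K$ is what makes $(a^{m},0)$ lie in $\overline{K}^{f}$ in the first place. Everything else is routine translation between $A\bowtie^{f}J$ and $f(A)+J$ via the identification $(a,f(a)+j)^{k}=(a^{k},(f(a)+j)^{k})$ and the definitions of $\overline{K}^{f}$ and $\delta_{\bowtie^{f}}$.
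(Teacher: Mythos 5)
Your proposal is correct and follows essentially the same route as the paper: translate powers componentwise via $(a,f(a)+j)^{k}=(a^{k},(f(a)+j)^{k})$, use Lemma \ref{33} for the non-closedness transfer, and in $(2)\Rightarrow(1)$ handle the case $(f(a)+j)^{m}=0$ by the contrapositive of the unbreakable-zero hypothesis. Your write-up is in fact slightly more careful than the paper's, which at several points writes $K$ and $\overline{K}^{f}$ where $\delta_{1}(K)$ and $\delta_{\bowtie^{f}}(\overline{K}^{f})$ are meant, and you exhibit the unbreakable-zero element of $\overline{K}^{f}$ directly rather than re-invoking Lemma \ref{33}; these are cosmetic differences only.
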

\begin{proof}
$(1)\Rightarrow (2).$ Suppose $\overline{K}^{f}$ is a weakly $(m,n)-$closed $\delta_{\bowtie^{f}}-$primary ideal of $A\bowtie^{f}J.$  We claim that $K$ is a weakly $(m,n)-$closed $\delta_{1}-$primary ideal of $f(A)+J.$  Indeed, let $0\neq (f(a)+j)^{m}\in K$ with $f(A)+j\in f(A)+J.$  Then, $0\neq(a,f(a)+j)^{m}\in\overline{K}^{f}.$  Since $\overline{K}^{f}$ is a weakly $(m,n)-$closed $\delta_{\bowtie^{f}}-$primary ideal of $A\bowtie^{f}J,$ we have $(a,f(a)+j)^{n}\in\overline{K}^{f}.$  Therefore, $f((a)+j)^{n}\in K.$  Hence, $K$ is a weakly $(m,n)-$closed $\delta_{1}-$primary ideal of $f(A)+J.$  By Lemma \ref{33}, $K$ is not $(m,n)-$closed $\delta_{1}-$primary ideal of $f(A)+J.$  Now, let $f(a)+j\in f(A)+J$ be an $(m,n)$-unbreakable-zero element of $K.$  We claim that $a^{m}=0.$  Indeed if $0\neq a^{m},$ we get $(a,f(a)+j)^{m}\in\overline{K}^{f},$ then $(a,f(a)+j)^{n}\in\overline{K}^{f}$, which implies $(f(a)+j)^{n}\in~K.$  This is a contradiction.  Hence, $a^{m}=0.$\\
$(2)\Rightarrow(1).$ Suppose that $K$ is a weakly $(m,n)-$closed $\delta_{1}-$primary ideal of $f(A)+J.$  Let $0\neq (a,f(a)+j)^{m}\in\overline{K}^{f}$ for some $(a,f(a)+j)\in~A\bowtie^{f}J.$  Then, obviously, $f(a)+j\in f(A)+J$ and $(f(a)+j)^{m}\in K.$  If $0\neq (f(a)+j)^{m},$ as $K$ is weakly $(m,n)-$closed $\delta_{1}-$primary ideal of $f(A)+J,$ we get $(f(a)+j)^{n}\in K,$ which implies $(a,f(a)+j)^{n}\in \overline{K}^{f}.$  Now, if $(f(a)+j)^{m}=0.$  We claim that $(f(a)+j)^{n}\in K,$ if not, we get $f(a)+j$ is an $(m,n)$-unbreakable-zero.  Then by assumption, we have $a^{m}=0 $ and thus $(a,f(a)+j)^{m}=0.$  This is a contradiction.  Thus, $(f(a)+j)^{n}\in K.$ Therefore, $(a,f(a)+j)^{n}\in\overline{K}^{f}.$  Hence, $\overline{K}^{f}$ is a weakly $(m,n)-$closed $\delta_{\bowtie^{f}}-$primary ideal of $A\bowtie^{f}J.$  By Lemma \ref{33}, we have $\overline{K}^{f}$ is not $(m,n)$closed $\delta_{\bowtie^{f}}-$primary ideal of $A\bowtie^{f}J.$
\end{proof}

\section{Conclusion}
This article included the structure of weakly $(m,n)-$closed $\delta-$primary ideals of a commutative ring $R$.  We have discussed and proved important results in this class of ideals.  We proved that if $R$ is a commutative ring, $\delta$ an expansion function of $Id(R)$, $m$ and $n$ positive integers, and $I$ a weakly $(m,n)-$ closed $\delta-$primary ideal of $R$ that is not $(m,n)-$ closed $\delta-$primary, then $I\subseteq~Nil(R)$.  Moreover, if $char(R)=m$ is prime, then $x^m=0$ for every $x\in~I$.  Also, we have studied the weakly $(m,n)-$ closed $\delta-$primary ideal $I$ of the trivial ring extension of $R$ by an $R-$module $M$.  We transferred the notion of weakly $(m,n)-$closed $\delta-$primary ideals to the amalgamated algebras along an ideal.\\
\textbf{Acknowledgement}\\
The authors are grateful to the reviewers for their helpful comments and suggestions aimed at improving this paper.

\end{document}